\numberwithin{equation}{section}
\newcommand{\RNum}[1]{\uppercase\expandafter{\romannumeral #1\relax}}
\newcommand{\R}{{\mathbb R}}
\newcommand{\N}{{\mathbb N}}
\newcommand{\Z}{{\mathbb Z}}
\newcommand{\HH}{{\mathcal{H}}}
\newtheorem{theorem}{Theorem}[section]
\newtheorem{lem}[subsection]{Lemma}
\newtheorem{thm}[theorem]{Theorem}
\newtheorem{pro}{Proposition}[section]
\newtheorem{cor}[theorem]{Corollary}
\newtheorem{defi}[theorem]{Definition}
\newtheorem{rem}[theorem]{Remark}
\newtheorem{exa}[theorem]{Example}
\newcommand{\Hmm}[1]{\leavevmode{\marginpar{\tiny%
$\hbox to 0mm{\hspace*{-0.5mm}$\leftarrow$\hss}%
\vcenter{\vrule depth 0.1mm height 0.1mm width \the\marginparwidth}%
\hbox to
0mm{\hss$\rightarrow$\hspace*{-0.5mm}}$\\\relax\raggedright #1}}}
\title
{Steklov eigenvalue problem on subgraphs of integer lattices}
\author{Wen Han}
\address{Wen Han: School of Mathematical Sciences, Fudan University, Shanghai 200433, China.}
\email{wenhan122@foxmail.com}
\author{Bobo Hua}
\address{Bobo Hua: School of Mathematical Sciences, LMNS, Fudan University, Shanghai 200433,
China; Shanghai Center for Mathematical Sciences, Fudan University, Shanghai 200433, China}
\email{bobohua@fudan.edu.cn}
\begin{document}


\begin{abstract}
\setlength{\parindent}{0pt} \setlength{\parskip}{1.5ex plus 0.5ex
minus 0.2ex} 

We study the eigenvalues of the Dirichlet-to-Neumann operator on a finite subgraph of the integer lattice $\Z^n.$ We estimate the first $n+1$ eigenvalues using the number of vertices of the subgraph. As a corollary, we prove that the first non-trivial eigenvalue of the Dirichlet-to-Neumann operator tends to zero as the number of vertices of the subgraph tends to infinity.
\end{abstract}

\maketitle

\section{Introduction}

Given a bounded smooth domain $\Omega$ in $\mathbb{R}^n,$ we consider the Steklov (eigenvalue) problem on $\Omega.$ For a smooth function {$\varphi $} on $\partial \Omega,$ we denote by $u_{\varphi}$ the harmonic extension of $\varphi $ into $\Omega$ which satisfies 
\[\left\{\begin{array}{ll} \Delta u_{\varphi}(x)=0,& x\in \Omega,\\
u_{\varphi}|_{\partial \Omega}=\varphi.& \end{array}\right.\] The Steklov problem on $\Omega$ is the following eigenvalue problem,
$$\Lambda \varphi:=\frac{\partial u_{\varphi}}{\partial n}=\lambda \varphi \quad \mathrm{on}\ \partial \Omega,$$ where $n$ denotes the unit outward normal of $\partial\Omega.$
The operator $\Lambda$ is called the Dirichlet-to-Neumann (DtN in short) operator, which is also known as the voltage-current map in physics, see e.g. Calder\'{o}n's problem \cite{Calderon2006}. As a pseudo-differential operator, the eigenvalues of DtN operator $\Lambda,$ called Steklov eigenvalues,  are discrete and can be ordered as 
$$0=\sigma_1(\Omega)\leq \sigma_2(\Omega)\leq \cdots\leq \sigma_k(\Omega)\leq \cdots \uparrow \infty.$$
The Steklov problem has been intensively studied for domains in Euclidean spaces and Riemannian manifolds in the literature, see e.g. \cite{Weinstock1954,ESCOBAR1997,Escobar1999,BrockF2001,HenrotPhilippinSafoui2008,WANG2009,Sylvester2010,COLBOIS2011,HASSANNEZHAD2011,GirouardPolterovich2012,fraser2014,KarpukhinKokarevPolterovich2014,GirouardLaugesenSiudeja2016,Fraser2016,Matevossian2017,FallWeth2017,YangYu2017}.

For $n=2,$ Weinstock \cite{Weinstock1954} proved that for any planar simply-connected domain $\Omega$ with analytic boundary, 
\begin{equation}\label{Wei}
\sigma_2(\Omega)\leq \frac{2\pi}{\mathrm{L}(\partial\Omega)},
\end{equation} 
where $\mathrm{L}(\partial\Omega)$ denotes the length of the boundary $\partial\Omega.$ The statement was generalized to domains with Lipschitz boundary by Girouard and Polterovich\cite{girouard2010}. The higher dimensional generalization was proved by Bucur, Ferone, Nitsch and Trombetti \cite{bucur2017}: for any bounded convex $\Omega\subset \R^n,$ $n\geq 3,$
\[
\sigma_2(\Omega) (\mathrm{area}(\partial\Omega))^{\frac{1}{n-1}}  \leq \sigma_2(B) (\mathrm{area}(\partial B))^{\frac{1}{n-1}},
\] where $B$ is the unit ball and $\mathrm{area}(\cdot)$ denotes the area of $(\cdot),$
and the equality holds only for balls, see also Fraser and Schoen \cite{fraser2019} for related results.

The estimate (\ref{Wei}) was improved by Hersch and Payne \cite{HerschPayne1968} to the following
\begin{equation}\label{HP}
\frac{1}{\sigma_2(\Omega)}+\frac{1}{\sigma_3(\Omega)}\geq \frac{\mathrm{L}(\partial\Omega)}{\pi}.
\end{equation}
Concerning with the Steklov eigenvalues and the volume of the domain, Brock proved the following result.

\begin{theorem}[Theorem~3 in \cite{BrockF2001}]\label{thm:b1} Let $\Omega$ be a smooth domain in $\R^n.$ Then
$$\sum_{i=2}^{n+1}\frac{1}{\sigma_i(\Omega)}\geq C(n)(\mathrm{vol}(\Omega))^{\frac1n},$$ where $\mathrm{vol}(\Omega)$ denotes the volume of $\Omega,$ and $C(n)=n\omega_n^{-\frac1n}$ (here $\omega_n$ is the volume of the unit ball in $\R^n$).  
\end{theorem}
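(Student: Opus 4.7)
My plan is to combine the min--max principle for Steklov eigenvalues applied to the coordinate functions $x_1,\ldots,x_n$ with a geometric isoperimetric inequality for the second moment of the boundary. First I would translate coordinates so that the centroid of $\partial\Omega$ lies at the origin, $\int_{\partial\Omega}x_i\,d\sigma=0$ for each $i$, and then rotate so that the symmetric positive definite matrix $B_{ij}:=\int_{\partial\Omega}x_ix_j\,d\sigma$ is diagonal with eigenvalues $\beta_1\geq\cdots\geq\beta_n>0$. In these coordinates the harmonic functions $x_1,\ldots,x_n$ are mean-zero and mutually orthogonal on $\partial\Omega$, exactly as for the first nontrivial Steklov eigenfunctions of a ball.

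Next, for each $k\in\{2,\ldots,n+1\}$ I would insert the trial subspace $V_k=\mathrm{span}(1,x_1,\ldots,x_{k-1})$ into the variational characterization
\[
\sigma_k(\Omega)=\min_{\substack{V\subset H^1(\Omega)\\ \dim V=k}}\ \max_{0\neq u\in V}\ \frac{\int_\Omega|\nabla u|^2}{\int_{\partial\Omega}u^2}.
\]
For $u=c+\sum_{i=1}^{k-1}a_ix_i$, the centering and diagonalization collapse the Rayleigh quotient to $\bigl(\sum_i a_i^2\bigr)\mathrm{vol}(\Omega)/\bigl(c^2\mathrm{area}(\partial\Omega)+\sum_i a_i^2\beta_i\bigr)$, whose supremum on $V_k$ equals $\mathrm{vol}(\Omega)/\beta_{k-1}$. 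Hence $\sigma_k\leq \mathrm{vol}(\Omega)/\beta_{k-1}$, and summing reciprocals yields
\[
\sum_{k=2}^{n+1}\frac{1}{\sigma_k(\Omega)}\;\geq\; \frac{1}{\mathrm{vol}(\Omega)}\sum_{i=1}^n\beta_i\;=\;\frac{\int_{\partial\Omega}|x|^2\,d\sigma}{\mathrm{vol}(\Omega)}.
\]

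It then remains to prove the geometric inequality
\[
\int_{\partial\Omega}|x|^2\,d\sigma\;\geq\; n\,\omega_n^{-1/n}\,\mathrm{vol}(\Omega)^{(n+1)/n},
\]
valid with the origin at the centroid of $\partial\Omega$ and sharp on centered balls. For $\Omega$ star-shaped about the origin, I would write $\partial\Omega=\{\rho(\theta)\theta:\theta\in S^{n-1}\}$, use the pointwise bound $dS\geq\rho^{n-1}d\theta$ to obtain $\int_{\partial\Omega}|x|^2\,d\sigma\geq\int_{S^{n-1}}\rho^{n+1}d\theta$, and then apply Jensen's inequality to the convex power $t\mapsto t^{(n+1)/n}$ together with the identity $\int_{S^{n-1}}\rho^n\,d\theta=n\,\mathrm{vol}(\Omega)$ to recover exactly the claimed lower bound. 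Combined with the previous display, this proves the theorem.

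The principal obstacle is extending this geometric inequality to \emph{general} smooth domains, which need not be star-shaped about their boundary centroid. I expect this is handled either by a radial (Schwarz) symmetrization about the centroid of $\partial\Omega$, combined with a P\'olya--Szeg\H{o}-type argument showing the rearrangement preserves volume while not increasing $\int_{\partial\Omega}|x|^2\,d\sigma$, or by appealing to a known moment-of-inertia isoperimetric inequality in the literature.
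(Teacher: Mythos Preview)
Your approach is correct and is essentially Brock's argument, which the paper does not reprove but cites; the paper only singles out the key geometric ingredient as Lemma~\ref{lem:rearr}. Your variational step with the coordinate functions is exactly right (the paper, in its discrete analog, uses the equivalent Hersch--Payne ``sum of reciprocals'' characterization, Theorem~\ref{thm:variation}, with normalized coordinate functions, but your individual min--max with the diagonalized $\beta_i$ reaches the same conclusion).

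The only point you flag as an obstacle---extending
\[
\int_{\partial\Omega}|x|^2\,d\sigma\;\geq\; n\,\omega_n^{-1/n}\,\mathrm{vol}(\Omega)^{(n+1)/n}
\]
beyond the star-shaped case---is precisely what Lemma~\ref{lem:rearr} (from Betta et al.) supplies: take $g(t)=t^2$, check that $(g(z^{1/n})-g(0))z^{1-1/n}=z^{1+1/n}$ is convex, and the lemma gives $\int_{\partial\Omega}|x|^2\,d\sigma\geq n\omega_n R^{n+1}$ with $\omega_n R^n=\mathrm{vol}(\Omega)$, for arbitrary bounded Lipschitz domains and arbitrary placement of the origin. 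So your anticipated ``known moment-of-inertia isoperimetric inequality in the literature'' is exactly this weighted isoperimetric inequality, and with it your proof is complete.
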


In this paper, we study the Steklov problem on graphs and prove a discrete analog for Brock's result. The DtN operator on a subgraph of a graph was introduced by \cite{BoboYanZuoqin2017,Miclo2017} independently, see e.g. \cite{BoboYanZuoqin2018,Perrin2018}
 for more results. A graph $G=(V,E)$ consists of the set of vertices $V$ and the set of edges $E.$ In this paper, we only consider simple, undirected graphs. Two vertices $x,y$ are called neighbours, denoted by $x\sim y$, if there is an edge $e$ connecting $x$ and $y,$ i.e. $e=\{x,y\}\in E.$ Integer lattice graphs are of particular interest which serve as the discrete counterparts of $\R^n.$ We denote by $\Z^n$ the set of integer $n$-tuples $\R^n.$ The $n$-dimensional integer lattice graph, still denoted by $\Z^n,$ is the graph consisting of the set of vertices $V=\Z^n$ and the set of edges $$E=\left\{\{x,y\}: x,y\in \Z^n, \sum_{i=1}^n|x_i-y_i|=1\right\}.$$
Let $\Omega$ be a finite subset of $\Z^n.$ We denote by  $$\delta \Omega:=\{x \in \Z^n\setminus \Omega :  \exists\ y\in \Omega,s.t. \  x\sim y \}$$  the vertex boundary of $\Omega.$ Analogous to the continuous setting, one can define the DtN operator on $\Omega,$ $$\Lambda:\R^{\delta\Omega}\to\R^{\delta\Omega},$$ where $\R^{\delta\Omega}$ denotes the set of functions on $\delta\Omega,$ see Section~\ref{sec:2} for details. In this paper, we denote by $|\cdot|$ the cardinality of a set.
Note that $\Lambda$ can be written as a symmetric matrix whose eigenvalues are ordered as 
\begin{equation}
0=\lambda_1(\Omega)\leq \lambda_2(\Omega)\leq \cdots\leq \lambda_{N}(\Omega), \ \text{where}~N=|\delta\Omega|. \label{order}
\end{equation} 

The following is our main result, which is a discrete analog of \cite[Theorem~3]{BrockF2001}. 




\begin{theorem}\label{mainthm:1} Let $\Omega$ be a finite subset of $\Z^n.$ Then 
\begin{equation}
\sum^{n+1}_{i=2}\frac{1}{\lambda _i(\Omega)}
\geq C_1|\Omega|^{\frac1n}-\frac{C_2}{|\Omega|}, 
\end{equation} where $\lambda_i(\Omega)$ are Steklov eigenvalues on $\Omega,$ $C_1=(64 n^3 \omega_n^{\frac1n})^{-1},$ and $C_2=\frac{1}{32n}.$

\end{theorem}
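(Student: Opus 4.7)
The strategy is to imitate Brock's continuous argument, exploiting the lattice feature that on $\mathbb Z^n$ the coordinate functions $x_1,\dots,x_n$ are themselves discrete harmonic; hence the harmonic extension of $x_j|_{\delta\Omega}$ to $\bar\Omega:=\Omega\cup\delta\Omega$ is simply $x_j$ itself. After translating $\Omega$ so that $\sum_{x\in\delta\Omega}x=0$, each $x_j|_{\delta\Omega}$ lies in $1^{\perp}\subset L^{2}(\delta\Omega)$, and a short argument (each vertex of $\Omega$ has $\delta\Omega$-neighbours along every axis direction) shows that $V:=\mathrm{span}(x_1|_{\delta\Omega},\dots,x_n|_{\delta\Omega})$ is an $n$-dimensional subspace of $1^{\perp}$.

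The variational input is Cauchy interlacing for the compression $\Lambda|_V:=P_V\Lambda P_V|_V$: since $V\subset 1^{\perp}$, its eigenvalues $\mu_1\le\cdots\le\mu_n$ satisfy $\mu_j\ge\lambda_{j+1}$, so
\[
\sum_{i=2}^{n+1}\frac{1}{\lambda_i(\Omega)}\;\ge\;\mathrm{tr}\bigl((\Lambda|_V)^{-1}\bigr).
\]
To evaluate the trace I exploit the key lattice fact that the Dirichlet bilinear form diagonalises on $\{x_j\}$: every lattice edge lies along a unique axis, so $D(x_j,x_k)=\delta_{jk}E_j$ with $E_j=|\Omega\cup(\Omega-e_j)|\le 2|\Omega|$. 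Passing to an $L^{2}(\delta\Omega)$-orthonormal basis $e_j=\sum_k c_{jk}x_k$ of $V$ (whose coefficient matrix $C$ satisfies $CMC^{T}=I$ with $M_{jk}=\sum_{x\in\delta\Omega}x_jx_k$, so that $C^{T}C=M^{-1}$) converts the trace into $\mathrm{tr}(E^{-1}M)=\sum_j M_{jj}/E_j$. Bounding $E_j\le 2|\Omega|$ then gives the intermediate estimate
\[
\sum_{i=2}^{n+1}\frac{1}{\lambda_i(\Omega)}\;\ge\;\frac{1}{2|\Omega|}\sum_{x\in\delta\Omega}|x|^{2}.
\]

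The main obstacle is now a discrete Brock-type second-moment inequality $\sum_{x\in\delta\Omega}|x|^{2}\ge c_n|\Omega|^{(n+1)/n}-C_n'$ in the centered frame. My plan for this is a comparison with the continuous surrogate $\Omega_c:=\bigcup_{x\in\Omega}(x+[-\tfrac12,\tfrac12]^n)$, of Lebesgue volume $|\Omega|$. The faces of $\partial\Omega_c$ are in bijection with the lattice edges between $\Omega$ and $\delta\Omega$; the face corresponding to the edge $\{x,y\}$ with $y\in\delta\Omega$ is centred at the midpoint $(x+y)/2$ and contributes $|(x+y)/2|^2+(n-1)/12$ to $\int_{\partial\Omega_c}|z|^2\,dS$, and each $y\in\delta\Omega$ appears with multiplicity $d_\Omega(y)\le 2n$. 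Expanding $|(x+y)/2|^2$ and controlling the resulting interior contribution $\sum_{x\in\Omega}x_k^2$ by the boundary sum $\sum_{y\in\delta\Omega}y_k^2$ via the column argument (for any $x\in\Omega$, walking along $\pm e_k$ reaches a point of $\delta\Omega$ whose $k$-coordinate has absolute value at least $|x_k|$), one bounds $\int_{\partial\Omega_c}|z|^2\,dS$ above by a universal constant times $\sum_{x\in\delta\Omega}|x|^{2}$ plus an $O(|\partial\Omega_c|)$ correction. Applying the continuous moment inequality $\int_{\partial\Omega_c}|z-z_c|^{2}\,dS\ge n\omega_n^{-1/n}|\Omega|^{(n+1)/n}$ to $\Omega_c$ and carefully tracking the centering and discretization corrections (which, after division by $2|\Omega|$ in the previous display, are what produce the additive error $C_2/|\Omega|$) then yields the theorem with the announced constants $C_1=(64n^3\omega_n^{1/n})^{-1}$ and $C_2=1/(32n)$.
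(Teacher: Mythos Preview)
Your variational set-up is essentially the paper's: the coordinate functions $x_j$ are discrete-harmonic on $\Z^n$, they are $D_\Omega$-orthogonal with $D_\Omega(x_j,x_k)=\delta_{jk}E_j$, and the paper feeds the normalised functions $u_i=|E_{i-1}|^{-1/2}(z_{i-1}-\bar z_{i-1})$ into its variational principle, which is exactly equivalent to your interlacing/trace computation. So through the intermediate estimate $\sum_{i=2}^{n+1}\lambda_i^{-1}\ge (2|\Omega|)^{-1}\sum_{y\in\delta\Omega}|y|^2$ the two arguments coincide.

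The gap is in the second half. First, the ``column argument'' is false as stated: for the cube $\Omega=\{-N,\dots,N\}^n$ one has $\sum_{x\in\Omega}x_k^2$ of order $N^{n+2}$ while $\sum_{y\in\delta\Omega}y_k^2$ is only of order $N^{n+1}$, so the interior sum is not dominated by the boundary sum. It is also unnecessary: each face $\tau^\perp$ of $\partial\Omega_c$ lies within distance $\sqrt n/2$ of the boundary vertex $y_\tau:=P(\tau^\perp)\in\delta\Omega$, so $\int_{\tau^\perp}|z|^2\,d\HH^{n-1}\le (|y_\tau|+\sqrt n/2)^2$, and summing over faces gives $\int_{\partial\Omega_c}|z|^2\,d\HH^{n-1}\le 4n\sum_{y\in\delta\Omega}|y|^2+O_n(|\partial\Omega|)$ with no reference to interior points of $\Omega$.

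Second, and more seriously, this single-sum comparison cannot produce the error term $C_2/|\Omega|$. After dividing by $2|\Omega|$ the discretisation error is of size $|\partial\Omega|/|\Omega|$, which is bounded by $2n$ but does not decay; your route therefore yields $\sum\lambda_i^{-1}\ge c_n|\Omega|^{1/n}-C_n$ with an additive \emph{constant}, not $C_2/|\Omega|$. The paper obtains the sharper error by comparing the \emph{double} sum $\sum_{z,\omega\in\delta\Omega}|z-\omega|^2$ with the double integral $\int_{\partial\hat\Omega}\int_{\partial\hat\Omega}|s-t|^2$. The continuous lower bound for the latter is $2\HH^{n-1}(\partial\hat\Omega)\cdot n|\Omega|R=2|\partial\Omega|\cdot n|\Omega|R$, which carries an explicit factor $|\partial\Omega|$ that cancels against the $|\delta\Omega|\le|\partial\Omega|$ appearing in the denominator $2|\delta\Omega|\,|E(\Omega,\bar\Omega)|$. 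The only loss in the discrete--continuous comparison comes from ``bad'' pairs $(\tau_1,\tau_2)$ with $P(\tau_1^\perp)=P(\tau_2^\perp)\in\delta'\Omega$, whose total contribution is at most $4n^3|\delta'\Omega|\le 4n^3|\partial\Omega|$; after the same cancellation this becomes $O(n^2/|\Omega|)$. No analogous cancellation is available in a single-integral formulation, so the announced constants $C_1,C_2$ cannot be recovered along the path you outline.
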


\begin{rem} By this theorem, for $|\Omega|\geq (\frac{2C_2}{C_1})^{\frac{n}{n+1}},
$
$$\sum^{n+1}_{i=2}\frac{1}{\lambda _i(\Omega)}
\geq \frac12 C_1|\Omega|^{\frac1n},$$
where $(\frac{2C_2}{C_1})^{\frac{n}{n+1}}={(4n^2\omega_n^{\frac1n})^{\frac{n}{n+1}}}$ is of order $O(n^{\frac{3}{2}}), n\to\infty.$
\end{rem}

The main ingredient of the proof of Brock's result is a weighted isoperimetric inequality, see Lemma~\ref{lem:rearr} below, which depends on the rotational symmetry of the Euclidean spaces. As is well-known in the discrete theory, the symmetrization approaches do not work on $\Z^n$ due to the lack of rotational symmetry. In this paper, we follow Brock's idea to bound the Steklov eigenvalues by the geometric quantities of the subset in the lattice $\Z^n,$ and then estimate these discrete quantities by their counterparts in the Euclidean space $\R^n$, for which we can apply the symmetrization approach. 
Through this process we obtain the quantitative estimate of discrete Steklov eigenvalues, but lose the sharpness of the constants $C_1$ and $C_2$ in our result.

As a corollary, we obtain the estimate for the first non-trivial eigenvalue of the DtN operator.
\begin{cor}Let $\Omega$ be a finite subset of $\Z^n.$ Then
\begin{equation}\label{cor1}
\lambda _2(\Omega)\leq \frac{n}{C_1|\Omega|^{\frac1n}-\frac{C_2}{|\Omega|}},
\end{equation} where $C_1$ and $C_2$ are the constants in Theorem~\ref{mainthm:1}.
\end{cor}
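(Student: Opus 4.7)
The plan is to derive the Corollary as a direct consequence of Theorem~\ref{mainthm:1} by exploiting the ordering~\eqref{order} of the Steklov eigenvalues. Concretely, since $0 \leq \lambda_2(\Omega) \leq \lambda_3(\Omega) \leq \cdots \leq \lambda_{n+1}(\Omega)$, the reciprocals satisfy $\frac{1}{\lambda_i(\Omega)} \leq \frac{1}{\lambda_2(\Omega)}$ for every $i = 2, 3, \ldots, n+1$ (with the usual convention that $\frac{1}{0} = +\infty$, in which case the corollary's upper bound is vacuous). Summing these $n$ inequalities gives
\[
\sum_{i=2}^{n+1} \frac{1}{\lambda_i(\Omega)} \leq \frac{n}{\lambda_2(\Omega)}.
\]

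Next, I would combine this with the lower bound furnished by Theorem~\ref{mainthm:1}, obtaining
\[
\frac{n}{\lambda_2(\Omega)} \geq \sum_{i=2}^{n+1} \frac{1}{\lambda_i(\Omega)} \geq C_1 |\Omega|^{\frac{1}{n}} - \frac{C_2}{|\Omega|}.
\]
Under the assumption that $|\Omega|$ is large enough that the right-hand side $C_1 |\Omega|^{1/n} - C_2/|\Omega|$ is strictly positive (otherwise the claimed estimate~\eqref{cor1} is either trivial or vacuous), we may invert and rearrange to conclude
\[
\lambda_2(\Omega) \leq \frac{n}{C_1 |\Omega|^{\frac{1}{n}} - \frac{C_2}{|\Omega|}},
\]
which is precisely~\eqref{cor1}.

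There is essentially no obstacle here beyond the bookkeeping about positivity of the denominator; the entire content of the corollary lies in Theorem~\ref{mainthm:1}, and the passage from the reciprocal-sum estimate to a bound on $\lambda_2$ is the standard trick of replacing each reciprocal eigenvalue in the partial sum by the largest one, namely $1/\lambda_2$. In particular, no further properties of the lattice $\Z^n$ or of the DtN operator are needed for this step.
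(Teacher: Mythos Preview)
Your proposal is correct and matches the paper's implicit reasoning: the paper states this result as a corollary with no explicit proof, precisely because it follows from Theorem~\ref{mainthm:1} by the elementary observation that $\sum_{i=2}^{n+1}\frac{1}{\lambda_i(\Omega)}\leq \frac{n}{\lambda_2(\Omega)}$, which is exactly what you wrote. Your handling of the edge cases (positivity of the denominator, the convention when $\lambda_2=0$) is also appropriate.
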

\begin{rem} Recently, the above corollary has been extended to the DtN operators on subgraphs of Cayley graphs for discrete groups of polynomial growth by Perrin \cite{Perrin2020}.
\end{rem}
This yields an interesting consequence that for any sequence of finite subsets in $\Z^n,$ $\{\Omega_i\}_{i=1}^\infty,$ satisfying $|\Omega_i|\to \infty,$ \begin{equation}\label{eq:to0}\lambda _2(\Omega_i)\to 0,\quad i\to \infty.\end{equation}  
This is a rather special property for integer lattices, which does not hold for general infinite graphs. At the end of the paper, we give an example of an infinite graph of bounded degree, Example~\ref{tree}, in which there exist subsets $\Omega_i$ with $|\Omega_i|\to \infty$ such that
$$\lambda _2(\Omega_i)\geq c>0,\quad \forall i.$$ Note that this graph locally behaves similarly as a homogeneous tree of degree $3,$ which is a discrete hyperbolic model.

The paper is organized as follows: In the next section, we recall some facts on DtN operators on graphs. In Section~\ref{sec:3}, we prove Theorem~\ref{mainthm:1}.

\section{Preliminaries}\label{sec:2}
Let $G=(V,E)$ be a simple, undirected graph.  
For two subsets $A, B$ of $V$, we define \[
E(A, B):=\{\{x,y\}\in E:~x \in A,~y \in B,\ \mathrm{or\ vice\ versa}\},
\] which is a set of edges connecting a vertex in $A$ to another vertex in $B.$
For a subset $\Omega$ of $V$, we write $\Omega^{c}:=V\setminus \Omega.$ The edge boundary of $\Omega$ is defined as $\partial \Omega:=E(\Omega,\Omega^{c}),$ and we set $\bar{\Omega}:=\Omega \cup \delta \Omega$. 

For a finite subset $S$ of $V,$ we denote by $\R^S$ the set of functions on $S,$ by $\ell^2(S)$ the Hilbert space of functions on $S$ equipped with the inner product $$\langle \varphi, \psi \rangle_{S}=\sum_{x\in S}\varphi(x)\psi(x), \quad \varphi, \psi\in\mathbb{R}^{S}.$$ In the following, we will take $S=\Omega,$ $\delta\Omega,$ or $\bar{\Omega},$ respectively. 
For $u\in\R^{\bar{\Omega}},$ 
the Dirichlet energy of $u$ is defined as
\[
D_{\Omega}(u):=\sum_{\{x,y\}\in E(\Omega, \bar{\Omega})}(u(x)-u(y))^2.
\] The polarization of the Dirichlet energy is given by
\begin{equation}\label{energy}
D_{\Omega}(u,v):=\sum_{\{x,y\}\in E(\Omega, \bar{\Omega})}(u(x)-u(y))(v(x)-v(y)),\ \forall u,v\in\R^{\bar{\Omega}}.
\end{equation}

The Laplace operator on $\Omega$ is defined as
\begin{align*}
\Delta :\R^{\bar{\Omega}} & \rightarrow \R^{\Omega} \\ 
u&   \mapsto   \Delta u(x):=\sum_{y\in V: y\sim x}(u(y)-u(x)).
\end{align*}
The exterior normal derivative on vertex boundary set $\delta \Omega$ is defined as
\begin{align*}
\frac{\partial }{\partial n} :\R^{\bar{\Omega}} & \rightarrow \R^{\delta\Omega} \\ 
u&   \mapsto   \frac{\partial u}{\partial n} (x):=\sum_{y\in \Omega: y\sim x}(u(x)-u(y)),\ x\in \delta\Omega.
\end{align*}
The Dirichlet-to-Neumann (DtN in short) operator is defined as
\begin{align*}
\Lambda :\R^{\delta\Omega} & \rightarrow \R^{\delta\Omega} \\ 
\varphi&   \mapsto   \Lambda \varphi := \frac{\partial u_{\varphi}}{\partial n} ,
\end{align*}
where $u_{\varphi}$ is the harmonic function on $\Omega$ whose Dirichlet boundary condition on $\delta\Omega$ is given by $\varphi.$ 
We recall the well-known Green formula. 

\begin{thm}[Lemma 2.1 in \cite{BoboYanZuoqin2017}]\label{thm:foruse}For $\varphi\in \mathbb{R}^{\delta{\Omega}},$
\begin{equation} \label{green}
D_{\Omega}(u_{\varphi})=-\langle\Delta u_{\varphi}, u_{\varphi}\rangle_{\Omega}+\langle\frac{\partial u_{\varphi}}{\partial n},u_{\varphi}\rangle_{\delta\Omega},
\end{equation} where $u_{\varphi}$ is the harmonic extension of $\varphi$ in $\Omega.$
\end{thm}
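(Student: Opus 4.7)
The identity is the discrete analog of Green's first identity, and the argument is summation by parts. I would first observe that harmonicity of $u_\varphi$ plays no role in establishing the identity itself (it only makes the first term on the right-hand side trivially zero); thus it suffices to prove
\[
D_\Omega(u) = -\langle \Delta u, u\rangle_\Omega + \Bigl\langle \tfrac{\partial u}{\partial n}, u\Bigr\rangle_{\delta\Omega}
\]
for every $u \in \R^{\bar\Omega}$.

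Starting from the right-hand side, I would expand both inner products using the definitions of $\Delta$ and $\partial/\partial n$:
\begin{align*}
-\langle \Delta u, u\rangle_\Omega &= \sum_{x \in \Omega}\sum_{y \sim x}\bigl(u(x) - u(y)\bigr)\,u(x), \\
\Bigl\langle \tfrac{\partial u}{\partial n}, u\Bigr\rangle_{\delta\Omega} &= \sum_{x \in \delta\Omega}\sum_{\substack{y \in \Omega \\ y \sim x}}\bigl(u(x) - u(y)\bigr)\,u(x).
\end{align*}
Since every neighbor of a vertex of $\Omega$ lies in $\bar\Omega$, the edges that appear in these two sums together are exactly the edges of $E(\Omega, \bar\Omega)$, and they divide into two types: edges with both endpoints in $\Omega$, and edges with one endpoint in $\Omega$ and the other in $\delta\Omega$. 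I would then reorganize the two sums as a sum over these edges with careful bookkeeping.

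For an edge $\{x,y\}$ of the first type, both orientations $(x,y)$ and $(y,x)$ contribute to $-\langle\Delta u, u\rangle_\Omega$, giving together $(u(x)-u(y))u(x) + (u(y)-u(x))u(y) = (u(x)-u(y))^2$. For an edge $\{x,y\}$ with $x\in\Omega$, $y\in\delta\Omega$, the orientation $(x,y)$ contributes to $-\langle\Delta u, u\rangle_\Omega$ and the orientation $(y,x)$ contributes to $\langle\partial u/\partial n, u\rangle_{\delta\Omega}$, and the same algebraic combination again yields $(u(x)-u(y))^2$. Summing over all edges of $E(\Omega,\bar\Omega)$ produces $D_\Omega(u)$, completing the proof.

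The only real obstacle is bookkeeping --- making sure each edge of $E(\Omega,\bar\Omega)$ is counted exactly once when the two sums are merged and that the sign conventions in $\Delta$ and $\partial/\partial n$ (both taking the form $u(x)-u(\text{neighbor})$ at the base vertex $x$) line up correctly. There is no analytic difficulty.
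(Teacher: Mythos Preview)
Your argument is correct: the identity is just discrete summation by parts, and your edge-by-edge bookkeeping is accurate. You are also right that harmonicity is irrelevant to the identity itself and that the formula holds for every $u\in\R^{\bar\Omega}$.

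There is nothing to compare against in the paper, however: the paper does not prove this statement but simply quotes it as Lemma~2.1 of \cite{BoboYanZuoqin2017}. Your proof is exactly the standard one that appears in that reference (and in essentially every treatment of discrete Green's formulas), so there is no methodological difference to discuss.
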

\begin{pro}
The DtN operator $\Lambda$ can be represented as a symmetric matrix, and hence is diagonalizable. The multiplicity of zero-eigenvalues of $\Lambda$ is equal to the number of connected components of the graph $(\bar{\Omega}, E(\Omega, \bar{\Omega}))$.
\end{pro}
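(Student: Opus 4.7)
The plan is to deduce both assertions from the Green identity (\ref{green}). Since the harmonic extension map $\varphi \mapsto u_\varphi$ is linear, so is $\Lambda = \frac{\partial}{\partial n}\circ(\varphi\mapsto u_\varphi)$; hence $\Lambda$ admits a matrix representation on the finite-dimensional space $\mathbb{R}^{\delta\Omega}$, and the task reduces to analyzing the quadratic form $\varphi \mapsto \langle \Lambda\varphi, \varphi\rangle_{\delta\Omega}$ together with its polarization.

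To prove symmetry, I would first upgrade (\ref{green}) to the bilinear identity
$$
D_\Omega(u,v) \;=\; -\langle \Delta u, v\rangle_\Omega + \bigl\langle \tfrac{\partial u}{\partial n},\, v\bigr\rangle_{\delta\Omega},
$$
valid for arbitrary $u,v \in \mathbb{R}^{\bar\Omega}$. This follows by a direct edge-by-edge computation: splitting $E(\Omega,\bar\Omega)$ into edges with both endpoints in $\Omega$ and edges with exactly one endpoint in $\delta\Omega$, both sides reduce to the same sum over unordered edges of $(u(x)-u(y))(v(x)-v(y))$. Applied with $u = u_\varphi$ and $v = u_\psi$, and using $\Delta u_\varphi \equiv 0$ on $\Omega$, this gives $\langle \Lambda\varphi, \psi\rangle_{\delta\Omega} = D_\Omega(u_\varphi, u_\psi)$. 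Swapping $\varphi$ and $\psi$ and invoking the manifest symmetry of $D_\Omega$ in its two arguments then yields $\langle \Lambda\varphi, \psi\rangle_{\delta\Omega} = \langle \Lambda\psi, \varphi\rangle_{\delta\Omega}$, so $\Lambda$ is symmetric and therefore orthogonally diagonalizable.

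For the kernel statement, (\ref{green}) already gives $\langle \Lambda\varphi, \varphi\rangle_{\delta\Omega} = D_\Omega(u_\varphi) \geq 0$, so $\Lambda$ is positive semidefinite and $\ker\Lambda = \{\varphi : D_\Omega(u_\varphi) = 0\}$. The vanishing of $D_\Omega(u_\varphi)$ is equivalent to $u_\varphi$ being constant along every edge of $E(\Omega,\bar\Omega)$, i.e.\ constant on each connected component $C_1,\ldots,C_k$ of the graph $(\bar\Omega, E(\Omega,\bar\Omega))$; in particular $\varphi$ is constant on each $C_j\cap\delta\Omega$. Conversely, given any such $\varphi$, I would define $u \in \mathbb{R}^{\bar\Omega}$ by assigning to every vertex of $C_j$ the constant value $\varphi|_{C_j\cap\delta\Omega}$. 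This $u$ extends $\varphi$ and is harmonic on $\Omega$: every edge at $x\in\Omega$ lies in $E(\Omega,\bar\Omega)$ and hence lands in the same component as $x$, making each term of $\Delta u(x)$ vanish. Thus $u = u_\varphi$ and $\Lambda\varphi = 0$.

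The main obstacle is the potential exceptional case of a component $C_j$ lying entirely inside $\Omega$: then $C_j \cap \delta\Omega$ is empty, the constant assigned to $C_j$ is arbitrary, and the kernel dimension would fall below $k$. This pathology does not arise in the paper's setting: for a finite $\Omega \subset \mathbb{Z}^n$, any $x \in C_j$ may be joined by a $\mathbb{Z}^n$-path to infinity, whose last vertex in $\Omega$ lies in the same $(\bar\Omega, E(\Omega,\bar\Omega))$-component as $x$ and has a neighbor in $\delta\Omega$; thus every $C_j$ meets $\delta\Omega$ and the multiplicity of zero eigenvalues is exactly $k$.
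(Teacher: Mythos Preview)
Your proof is correct and follows essentially the same route as the paper: symmetry via the polarized Green identity $\langle \Lambda\varphi,\psi\rangle_{\delta\Omega}=D_\Omega(u_\varphi,u_\psi)$, and identification of $\ker\Lambda$ via $D_\Omega(u_\varphi)=0$. Your write-up is in fact more complete than the paper's, which stops at ``$u_\varphi$ is constant on each component'' and asserts the result; you supply the converse direction explicitly and address the subtlety that every component of $(\bar\Omega,E(\Omega,\bar\Omega))$ must meet $\delta\Omega$, which the paper leaves implicit.
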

\begin{proof}[Proof]

Assume that $\phi, \varphi \in\mathbb{R}^{\delta\Omega}$ are eigenfunctions, which can be regarded as vectors in $\R^{N}$ with $N=|\delta\Omega|,$ of the DtN operator $\Lambda,$ which is a $N\times N$ matrix. Using the polarization of the Dirichlet energy (\ref{energy}) and Green formula (\ref{green}), we have
$$\langle\Lambda u_{\phi}, u_{\varphi}\rangle_{\delta\Omega}= D_{\Omega}(u_{\phi}, u_{\varphi}) = D_{\Omega}(u_{\varphi}, u_{\phi})= \langle u_{\phi}, \Lambda u_{\varphi}\rangle_{\delta\Omega}.$$
Hence, $\Lambda$ is a symmetric matrix, and hence is diagonalizable.

Assume that $\varphi\in\mathbb{R}^{\delta\Omega}$ is an eigenfunction corresponding to the eigenvalue 0 of the DtN operator $\Lambda,$ then by the Green formula (\ref{green}),
\[
D_{\Omega}(u_{\varphi})=-\langle\Delta u_{\varphi}, u_{\varphi}\rangle_{\Omega}+\langle\frac{\partial u_{\varphi}}{\partial n},u_{\varphi}\rangle_{\delta\Omega}=0.
\]
Thus, $u_{\varphi}$ is constant on every connected component of  $(\bar{\Omega}, E(\Omega, \bar{\Omega})).$ This yields the result.
\end{proof}
Let $\lambda$ be an eigenvalue of the DtN operator $\Lambda$ satisfying $$\frac{\partial u_{\varphi}}{\partial n} =\lambda u_{\varphi}\ \mathrm{on \ \delta \Omega}.$$ Then, by the Green formula (\ref{green}) we get
\[
 D_{\Omega}(u_{\varphi})=\lambda \langle u_{\varphi},u_{\varphi}\rangle_{\delta\Omega}=\lambda \sum_{z \in \delta\Omega}{\varphi}^2(z).
\]
The following variational principle is useful, see e.g. \cite[p.99]{Bandle1980}.

\begin{thm}\label{thm:variation}
 Let $\Omega$ be a finite subset of $\Z^n.$ Then
%

\begin{equation}
 \sum^{p}_{i=2}\frac{1}{\lambda _i(\Omega)}=
 \max\left\{ \sum^{p}_{i=2}\sum_{z\in \delta\Omega}v_i^2(z): v_i\in \R^{\bar\Omega}, D_{\Omega}(v_i,v_j)=\delta_{ij}, \sum_{z\in \delta\Omega}v_i(z)=0, \ 2\leq i, j\leq p \right\},
\end{equation} where $p \geq 2$ and $\delta_{ij}=\left\{\begin{array}{ll} 1,& i=j,\\
0,& i\neq j,\end{array}\right.$ where we adopt the traditional convention that $\lambda_{l}=+\infty \text { for } l>|\delta \Omega|$.
\end{thm}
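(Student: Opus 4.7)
The plan is to treat the identity as a Rayleigh--Ritz formula for the sum of reciprocals of the positive eigenvalues of the symmetric operator $\Lambda$, extended to admit test functions defined on all of $\bar\Omega$ rather than only on $\delta\Omega$. The guiding observation is that among all extensions of a prescribed boundary datum, the harmonic extension minimizes the Dirichlet energy, so the interior freedom in the $v_i$ cannot enlarge the supremum.

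I would first reduce to boundary data. For any admissible tuple $(v_i)_{i=2}^p$, write $\varphi_i := v_i|_{\delta\Omega}$, let $u_i := u_{\varphi_i}$ be its harmonic extension, and set $h_i := v_i - u_i$, which vanishes on $\delta\Omega$. Applying the Green formula (\ref{green}) with $u_i$ harmonic in $\Omega$ and $h_j$ vanishing on $\delta\Omega$ yields $D_\Omega(u_i, h_j) = 0$, hence
\[
D_\Omega(v_i, v_j) = D_\Omega(u_i, u_j) + D_\Omega(h_i, h_j) =: M_{ij} + H_{ij}.
\]
Both $M$ and $H$ are $(p-1)\times(p-1)$ positive semi-definite, and the normalization $D_\Omega(v_i,v_j)=\delta_{ij}$ forces $M + H = I$, so $0 \le M \le I$ in the PSD order. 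Since $h_i=0$ on $\delta\Omega$, the objective equals $\sum_{i=2}^p \|\varphi_i\|_{\ell^2(\delta\Omega)}^2$.

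Next, assuming $(\bar\Omega, E(\Omega,\bar\Omega))$ is connected (the disconnected case splits component-wise or yields the trivial identity $+\infty=+\infty$), I would pick an $\ell^2(\delta\Omega)$-orthonormal eigenbasis $\{\psi_k\}_{k=1}^N$ of $\Lambda$ with $\psi_1$ constant. The condition $\sum_z \varphi_i(z) = 0$ is precisely $\varphi_i \perp \psi_1$, so $\varphi_i = \sum_{k \ge 2} a_{ik}\psi_k$. Using $D_\Omega(u_{\psi_k}, u_{\psi_l}) = \langle \Lambda \psi_k, \psi_l\rangle_{\delta\Omega} = \lambda_k\delta_{kl}$, one finds $M = A D A^T$ with $A = (a_{ik})$ and $D = \mathrm{diag}(\lambda_2, \ldots, \lambda_N)$. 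The change of variable $B := A D^{1/2}$ converts $M \le I$ into $BB^T \le I$ (equivalently, all singular values of $B$ are $\le 1$) and the objective into $\operatorname{tr}(AA^T) = \operatorname{tr}(D^{-1} B^T B)$.

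The final, and most delicate, step is the trace bound. Since $B^T B$ is PSD of rank at most $p-1$ with all eigenvalues in $[0,1]$, von Neumann's trace inequality (equivalently, Ky Fan majorization), applied with the decreasingly sorted spectrum $1/\lambda_2 \ge 1/\lambda_3 \ge \cdots$ of $D^{-1}$, gives
\[
\operatorname{tr}(D^{-1} B^T B) \le \sum_{i=2}^{p} \frac{1}{\lambda_i};
\]
the rank constraint is precisely what selects the first $p-1$ reciprocals. Equality is attained by the canonical choice $v_i := \lambda_i^{-1/2}\, u_{\psi_i}$ for $2 \le i \le p$, which satisfies every constraint directly and yields $\sum_z v_i^2(z) = 1/\lambda_i$; summing over $i$ matches the upper bound. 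The convention $\lambda_\ell = +\infty$ for $\ell > |\delta\Omega|$ simply truncates the sum, requiring no separate argument.
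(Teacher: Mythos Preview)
The paper does not supply a proof of this theorem; it is quoted as a known variational principle with a pointer to Bandle's monograph. So there is no ``paper's own proof'' to compare against, and your argument stands on its own.

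Your proof is essentially correct. The decomposition $v_i=u_i+h_i$ into harmonic-plus-zero-boundary parts, with $D_\Omega(u_i,h_j)=0$ from the Green formula, cleanly reduces the problem to boundary data and yields the matrix constraint $M=ADA^T\le I$. Passing to $B=AD^{1/2}$ and invoking the von~Neumann (Ky~Fan) trace inequality is exactly the right move: since $B^TB$ is PSD with eigenvalues in $[0,1]$ and rank at most $p-1$, one gets $\operatorname{tr}(D^{-1}B^TB)\le\sum_{i=2}^p 1/\lambda_i$, and the choice $v_i=\lambda_i^{-1/2}u_{\psi_i}$ attains equality.

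Two small cautions. First, in the disconnected case the supremum on the right is indeed $+\infty$ (one can push $\|\varphi_2\|^2$ to infinity while keeping $D_\Omega(v_2)=1$ by perturbing a piecewise-constant function), but it is not attained, so ``max'' should be read as ``sup'' there; this is a quibble with the statement rather than with your proof. Second, your remark that the convention $\lambda_\ell=+\infty$ for $\ell>N$ ``requires no separate argument'' is slightly brisk: the upper bound goes through unchanged, but to \emph{attain} equality when $p>N$ you still need $p-N$ additional test functions with zero boundary values that are $D_\Omega$-orthonormal among themselves; these exist because $D_\Omega$ is positive definite on $\{h:h|_{\delta\Omega}=0\}\cong\R^{\Omega}$ in the connected case, so one needs only $p\le|\bar\Omega|$. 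Neither point affects the application in the paper, which uses only the inequality $\sum_{i=2}^{p}1/\lambda_i\ge\sum_i\sum_z v_i^2(z)$ with $p=n+1$.
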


\section{The proof of Theorem~\ref{mainthm:1}}\label{sec:3} In this section, we prove the main theorem.
Let $\Z^n$ be the $n$-dimensional integer lattice graph, and $$e_i:=(0,\cdots,\underset{i-1}{0},\underset{i}{1}, \underset{i+1}{0},\cdots 0),\quad 1\leq i\leq n,$$  be the standard basis of $\R^n.$ We denote by $$Q=\{x=(x_1,x_2,\cdots,x_n)\in \R^n: |x_i|\leq \frac12,\ \forall 1\leq i\leq n\}$$ the unit cube centered at the origin in $\R^n,$ and by
$$Q_i:=Q\cap\{x\in \R^n: x_i=0\},\quad 1\leq i\leq n,$$ the $(n-1)$-dimensional unit cube in $x_i$-hyperplane. For any edge $\tau=\{x,y\}$ in $\Z^n,$ there is a unique $e_i$ such that $y-x=e_i$ or $-e_i,$  and we define
$$\tau^{\perp}:=\frac12(x+y)+Q_i.$$ That is, $\tau^{\perp}$ is an $(n-1)$-dimensional unit cube centered at the point $\frac12(x+y)$ parallel to $x_i$-hyperplane. Note that such a cube $\tau^\perp$ is one-to-one corresponding to $\tau.$

For any finite subset $\Omega\subset \Z^n,$ we denote $$\partial \Omega^\perp :=\{\tau^\perp: \tau\in\partial\Omega\}.$$ Note that for any $\tau\in \partial \Omega,$ there is a unique end-vertex of the edge $\tau$ belonging to $\delta\Omega.$ We define a mapping 
\begin{align*}
P:\partial \Omega^\perp  & \rightarrow \delta\Omega \\ 
\tau^\perp&   \mapsto  P(\tau^\perp),
\end{align*} where $P(\tau^\perp)$ is the end-vertex of $\tau$ in $\delta\Omega.$
\begin{lem} For any $\tau_1$,$\tau_2\in \partial\Omega$ satisfying $P(\tau_1^\perp)\neq P(\tau_2^\perp),$ we have
\[
\int_{\tau_1^\perp}\int_{\tau_2^\perp}|s-t|^2d\HH^{n-1}(s)d\HH^{n-1}(t)\leq C_3\cdot |P(\tau_1^\perp)- P(\tau_2^\perp)|^2,
\] where $C_3=4n,$ and $\HH^{n-1}$ denotes the $(n-1)$-dimensional Hausdorff measure in $\R^n.$\label{L1}
\end{lem}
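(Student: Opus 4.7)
The plan is to parametrize points in the two cubes relative to their centers, expand $|s-t|^2$, and exploit the reflection symmetry of the unit cubes $Q_k$ about the origin to eliminate the cross terms. Let $p_i := P(\tau_i^\perp) \in \delta\Omega$, let $q_i$ denote the other endpoint of $\tau_i$, and let $c_i := \frac{1}{2}(p_i + q_i)$ be the center of $\tau_i^\perp$. Write $\tau_i = \{p_i, q_i\}$ with $q_i - p_i = \pm e_{k_i}$ for some index $k_i$, so that $\tau_i^\perp = c_i + Q_{k_i}$. I will substitute $s = c_1 + s'$ and $t = c_2 + t'$ with $s' \in Q_{k_1}$, $t' \in Q_{k_2}$, and expand
\[
|s-t|^2 \;=\; |c_1-c_2|^2 \;+\; 2(c_1-c_2)\cdot(s'-t') \;+\; |s'-t'|^2.
\]

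The next step is to integrate term-by-term. Each $Q_{k_i}$ has $\HH^{n-1}$-measure $1$ and is symmetric about the origin, so $\int_{Q_{k_i}} s'\,d\HH^{n-1}(s') = 0$, which kills the middle cross term entirely. The first term contributes $|c_1-c_2|^2$. For the last term, expanding $|s'-t'|^2 = |s'|^2 - 2s'\cdot t' + |t'|^2$ and again using the symmetry to kill $\int s' \cdot t'$, it suffices to evaluate $\int_{Q_k} |x|^2\,d\HH^{n-1}(x)$; this reduces to $(n-1)$ copies of $\int_{-1/2}^{1/2} u^2\,du = \tfrac{1}{12}$, giving $\tfrac{n-1}{12}$ per cube and total $\tfrac{n-1}{6}$. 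Thus
\[
\int_{\tau_1^\perp}\!\!\int_{\tau_2^\perp}|s-t|^2\,d\HH^{n-1}(s)\,d\HH^{n-1}(t) \;=\; |c_1-c_2|^2 + \frac{n-1}{6}.
\]

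Finally, I will convert $|c_1-c_2|$ into $|p_1-p_2|$. Since $c_i = p_i \pm \tfrac{1}{2}e_{k_i}$, the triangle inequality gives $|c_1-c_2| \leq |p_1-p_2| + 1$. The key point where the hypothesis enters is that $p_1 \neq p_2$ and both are integer points, so $|p_1-p_2|^2 \geq 1$. Consequently $(|p_1-p_2|+1)^2 \leq 4|p_1-p_2|^2$, and the $\tfrac{n-1}{6}$ remainder absorbs into $|p_1-p_2|^2$ as well, yielding a constant of the form $4 + \tfrac{n-1}{6} = \tfrac{n+23}{6}$, which is comfortably bounded by $4n$ for every $n \geq 1$. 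This completes the bound with $C_3 = 4n$.

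I do not anticipate a serious obstacle: the only slightly delicate point is keeping track of the two indices $k_1, k_2$, but since the symmetry of each $Q_{k_i}$ holds independently of which coordinate hyperplane it lies in, the cross terms vanish regardless of whether $k_1 = k_2$. The constant $C_3 = 4n$ is evidently not sharp, but that is consistent with the paper's stated strategy of accepting non-sharp constants in order to exploit Euclidean symmetrization in the subsequent arguments.
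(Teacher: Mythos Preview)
Your proof is correct and takes a genuinely different route from the paper. The paper simply bounds the integrand pointwise via the triangle inequality: since any $s\in\tau_1^\perp$ satisfies $|s-P(\tau_1^\perp)|\leq \tfrac{\sqrt{n}}{2}$ (and likewise for $t$), and $|P(\tau_1^\perp)-P(\tau_2^\perp)|\geq 1$, one gets $|s-t|\leq (\sqrt{n}+1)|P(\tau_1^\perp)-P(\tau_2^\perp)|$, then integrates the constant bound over the unit-measure cubes to obtain $(\sqrt{n}+1)^2\leq 4n$. You instead compute the double integral \emph{exactly} as $|c_1-c_2|^2+\tfrac{n-1}{6}$ by centering and exploiting the odd symmetry of $Q_{k_i}$, and only afterwards pass from centers to the boundary vertices $p_i$. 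Your approach yields the sharper constant $\tfrac{n+23}{6}$ (of order $n/6$ rather than $n$) at the cost of a slightly longer computation; the paper's pointwise argument is quicker and entirely adequate since, as you note, sharpness of $C_3$ is not needed downstream.
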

\begin{proof}[Proof]
Take $\tau_j=\{x_j, y_j\},$ $j=1,2,$ such that there exists $e_{i_j}$ satisfying $y_j-x_j=e_{i_j}.$ By the definition $\tau^{\perp}_1=\frac12(x_1+y_1)+Q_{i_1}.$ By the symmetry, without loss of generality we may assume that $P(\tau_1^\perp)=x_1.$ For any $s \in \tau_1^\perp,$ we write $s=\frac12(x_1+y_1)+q_{i_1}$ where $q_{i_1}\in Q_{i_1}.$ Then, 
\[
|s-P(\tau_1^\perp)|=|\frac12(x_1+y_1)+ q_{i_1}-x_1|=|\frac{e_{i_1}}{2}+q_{i_1}|\leq \frac{\sqrt{n}}{2}.
\]
Similarly, we get  for any $ t \in \tau_2^\perp$
\[
|t-P(\tau_2^\perp)|\leq \frac{\sqrt{n}}{2}.
\]
Since $|P(\tau_1^\perp)-P(\tau_2^\perp)|\geq 1$, by the triangle inequality, for any $s \in \tau_1^\perp, t \in \tau_2^\perp$ we have
\begin{eqnarray*}
|s-t|&\leq& |s-P(\tau_1^\perp)|+|P(\tau_1^\perp)-P(\tau_2^\perp)|+|P(\tau_2^\perp)-t|
\\&\leq&(\sqrt{n}+1)|P(\tau_1^\perp)-P(\tau_2^\perp)|.
\end{eqnarray*}
Hence,
\begin{eqnarray*}
\int_{\tau_1^\perp}\int_{\tau_2^\perp}|s-t|^2d\HH^{n-1}(s)d\HH^{n-1}(t)&\leq& (\sqrt{n}+1)^2 |P(\tau_1^\perp)- P(\tau_2^\perp)|^2\\&\leq& 4n |P(\tau_1^\perp)- P(\tau_2^\perp)|^2.
\end{eqnarray*}
This proves the lemma.
\end{proof}

For any $r>0,$ we denote by $$Q_r(x):=\{y\in \R^n:\max_{1\leq i\leq n}|y_i-x_i|\leq \frac{r}{2}\}$$ the cube of side length $r$ centered at  $x.$
For any subset $\Omega\subset \Z^n,$ we construct a domain $\hat{\Omega}$ in $\R^n$ as follows
\[
\hat{\Omega}:=\bigcup_{x \in\Omega}Q_1(x).
\]
Note that the topological boundary of $\hat{\Omega},$ denoted by $\partial(\hat{\Omega}),$ is one-to-one corresponding to $\partial \Omega^\perp,$ i.e. \begin{equation}\label{eq:pho}\partial(\hat{\Omega})=\bigcup_{\tau\in\partial \Omega}\tau^{\perp}.\end{equation} For our purposes,  we will use the geometry of $\hat{\Omega}$, which is a domain in $\R^n$, to estimate that of $\Omega,$ a subset in $\Z^n.$

We give the definition of ``bad" boundary vertices on $\delta\Omega.$
\begin{defi}[``Bad" boundary vertex]
We call $x\in \delta\Omega$ a ``bad" boundary vertex if it is isolated by the set $\Omega,$ i.e. for any $y\sim x$ in $\Z^n,$ $y\in \Omega.$ The set of ``bad" boundary vertices is denoted by $\delta'\Omega.$
\end{defi}

In the following, we give an example to present the role of ``bad" boundary vertices in our estimates, which motivates us to distinguish boundary vertices. In the proof of Theorem~\ref{mainthm:1}, following Brock \cite{BrockF2001} we reduce the estimate of Steklov eigenvalues to the lower bound estimate of $\sum_{z\in \delta\Omega}\sum_{\omega\in \delta\Omega}|z -\omega|^2,$ see \eqref{main} below. The strategy is to find its counterpart in the continuous setting for deriving such a lower bound, which can be estimated by isoperimetric inequalities in $\R^n.$ A natural choice is the quantity 
$$\int_{\partial(\hat{\Omega})}\int_{\partial(\hat{\Omega})}|s-t|^2d\HH^{n-1}(s)d\HH^{n-1}(t)=2\HH^{n-1}(\partial(\hat{\Omega}))\int_{\partial(\hat{\Omega})}|s-c|^2d\HH^{n-1}(s),$$
where $c=\frac{1}{\HH^{n-1}(\partial(\hat{\Omega}))}\int_{\partial(\hat{\Omega})}sd\HH^{n-1}(s)$ is the barycenter of $\partial(\hat{\Omega})$ and the right hand side is the derivation of the position vector of $\partial(\hat{\Omega})$ with respect to the measure $\HH^{n-1}.$  For the desired estimate
\begin{equation}\label{eq:ep1}\sum_{z\in \delta\Omega}\sum_{\omega\in \delta\Omega}|z -\omega|^2\geq C\int_{\partial(\hat{\Omega})}\int_{\partial(\hat{\Omega})}|s-t|^2d\HH^{n-1}(s)d\HH^{n-1}(t)+\cdots,\end{equation} we give the following example to indicate the difficulty.
\begin{exa}\label{ex:pp1} For $\Z^2,$ let $$\Omega:=\{(x_1,x_2)\in \Z^2 | \max\{|x_1|,|x_2|\}\leq 2, (x_1,x_2)\neq (0,0)\},$$ see Figure~\ref{fig378}.
Set $A_1:=\delta'\Omega=\{(0,0)\}, A_2:=\delta\Omega\setminus \delta'\Omega,$
$B_1:=\partial Q,$ the boundary of the unit cube, and $B_2:=\partial (Q_5((0,0))).$ Then $\partial(\hat{\Omega})=B_1\cup B_2.$ We have the following
\begin{eqnarray*}
\sum_{z\in \delta\Omega}\sum_{\omega\in \delta\Omega}|z -\omega|^2=
\left(\sum_{z\in A_1}\sum_{\omega\in A_1}+2\sum_{z\in A_1}\sum_{\omega\in A_2}+\sum_{z\in A_2}\sum_{\omega\in A_2}\right)|z -\omega|^2,
\end{eqnarray*}
\begin{eqnarray*}
\int_{\partial(\hat{\Omega})}\int_{\partial(\hat{\Omega})}|s-t|^2d\HH^{1}(s)d\HH^{1}(t)=
\left(\int_{B_1}\int_{B_1}+2\int_{B_1}\int_{B_2}+\int_{B_2}\int_{B_2}\right)|s-t|^2d\HH^{1}(s)d\HH^{1}(t).
\end{eqnarray*} For deriving the estimate \eqref{eq:ep1}, it is impossible to bound the term
$$\int_{B_1}\int_{B_1}|s-t|^2d\HH^{1}(s)d\HH^{1}(t)>0$$ from above by the term $\sum_{z\in A_1}\sum_{\omega\in A_1}|z -\omega|^2=0.$ The other terms are mutually comparable. This suggests that ``bad" boundary vertices, $A_1$ here, are the obstructions for the estimate \eqref{eq:ep1}.\end{exa}

 \begin{figure}[htbp]
 \begin{center}
   \begin{tikzpicture}
    \node at (0,0){\includegraphics[width=0.5\textwidth]{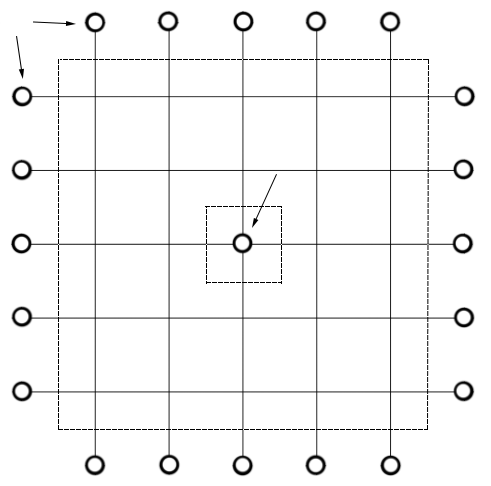}};
    \node at (-2.75,   -2.75){\Large $B_2$};
    \node at (-0.35,   -.35){\Large $B_1$};
    \node at (-3.8,  3.8){\Large $A_2$};
    \node at (0.6,  1.4){\Large $A_1$};
   \end{tikzpicture}
  \caption{$A_i, B_i~(i=1, 2)$ on $\Omega.$}
\label{fig378}
 \end{center}
\end{figure}

\begin{pro} For any finite subset $\Omega$ in $\Z^n,$ 
$$|\delta'\Omega|\leq |\Omega|.$$ 
\label{voln}
\end{pro}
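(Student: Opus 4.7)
The plan is to construct an explicit injection from $\delta'\Omega$ into $\Omega$, which immediately yields the cardinality bound. The natural candidate is a translation by a fixed standard basis vector.

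Concretely, I would define the map
\[
\phi : \delta'\Omega \longrightarrow \Omega, \qquad \phi(x) := x + e_1,
\]
and verify two facts. First, $\phi$ is well-defined (i.e., $\phi(x) \in \Omega$ for every $x \in \delta'\Omega$): by definition of a ``bad'' boundary vertex, every lattice neighbor of $x$ lies in $\Omega$, and $x + e_1$ is one such neighbor. Second, $\phi$ is injective: it is the restriction to $\delta'\Omega$ of the translation $y \mapsto y + e_1$ on $\Z^n$, which is a bijection of $\Z^n$ to itself. Consequently
\[
|\delta'\Omega| = |\phi(\delta'\Omega)| \leq |\Omega|,
\]
which is the claim.

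There is no real obstacle here; the statement is essentially a counting remark that formalizes the intuition that each isolated boundary vertex of $\Omega$ must be ``paid for'' by at least one interior vertex (in fact by $2n$ of them, so the bound is far from tight, but the version $|\delta'\Omega|\leq|\Omega|$ is all that is needed in the sequel). If one wanted a sharper estimate, one could average over all $2n$ unit translations $\pm e_i$ and use the fact that the images of $\delta'\Omega$ under these translations land in $\Omega$ with controlled overlap, yielding $|\delta'\Omega|\leq \tfrac{1}{2n}\sum_{i=1}^n(|\Omega+e_i|+|\Omega-e_i|)\cdot(\text{overlap factor})$; but since only the stated bound is invoked later, the single-translation argument above suffices.
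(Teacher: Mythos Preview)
Your proof is correct and essentially identical to the paper's own argument: define $F(x)=x+e_1$, observe it lands in $\Omega$ by the definition of $\delta'\Omega$, and note it is injective as a translation. One small correction to your side remark: the bound is in fact asymptotically sharp---the paper exhibits a family $\Omega_R\subset\Z^2$ with $|\delta'\Omega_R|/|\Omega_R|\to 1$---so the intuition that each bad vertex is ``paid for by $2n$ interior vertices'' does not yield a better inequality, because those $2n$ neighbors can be heavily shared among different bad vertices.
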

\begin{proof}[Proof of Proposition~\ref{voln}]We define a mapping 
\begin{align*}
F:\delta'\Omega   & \rightarrow \Omega\\ 
x&   \mapsto  F(x):=x+e_1.
\end{align*} 
It is easy to see that $F$ is injective, which yields $|\delta'\Omega|=|F(\delta'\Omega)|\leq |\Omega|.$ This proves the proposition. 
\end{proof}

The next example shows that the above estimate is sharp.
\begin{exa}For $R\in \N,$ set $$\Omega_R:= \{(x, y)\in \Z^2: R=|x|\geq |y|~\text{or}~R=|y|\geq |x|\}\cup \{(x, y)\in \Z^2: |x|\leq R-1,~|y|\leq R-1~\mathrm{s.t.}\ x+y\ \mathrm{is\ odd}\}.$$ 
Then $$\lim_{R\to\infty} \frac{|\delta'\Omega_R|}{|\Omega_R|}=1.$$ 
\end{exa}

For any subset $\Omega\subset \Z^n,$ any vertex in $V\setminus \bar{\Omega}$ is called an exterior vertex of $\Omega.$ For simplicity, we write $\Omega^e:=V\setminus \bar{\Omega}$ for the set of exterior vertices of $\Omega.$ This yields the decomposition 
\begin{equation}\label{decomposition}
\Z^n= \Omega \sqcup \delta\Omega \sqcup \Omega^e,
\end{equation} 
where $\sqcup$ denotes the disjoint union.

\begin{lem}
Let $\Omega$ be a finite subset of $\Z^n.$ For any $x \in \delta\Omega\setminus \delta'\Omega,$ $$|Q_2(x)\cap \delta\Omega|\geq 2.$$
\label{L2ofn}
\end{lem}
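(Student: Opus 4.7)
The goal is to produce a second vertex of $\delta\Omega$ in $Q_2(x)$ besides $x$ itself (note $x \in Q_2(x) \cap \delta\Omega$ trivially). The plan is to exploit the two defining properties of $x \in \delta\Omega \setminus \delta'\Omega$: there is a neighbor $z = x + u \in \Omega$ with $u \in \{\pm e_1, \ldots, \pm e_n\}$ (because $x \in \delta\Omega$), and a neighbor $y = x + v \notin \Omega$ with $v \in \{\pm e_1, \ldots, \pm e_n\}$ (because $x \notin \delta'\Omega$). Note $u \neq v$ since one lies in $\Omega$ and the other does not.

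A preliminary combinatorial observation: one can choose $u$ and $v$ so that additionally $v \neq -u$. Indeed, let $D^+ := \{u' : x + u' \in \Omega\}$ and $D^- := \{v' : x + v' \notin \Omega\}$, so $D^+ \sqcup D^- = \{\pm e_1, \ldots, \pm e_n\}$ and both are nonempty. If every pair $(u,v) \in D^+ \times D^-$ satisfied $v = -u$, then $D^+$ and $D^-$ would each be a singleton, forcing $|D^+| + |D^-| = 2$, i.e.\ $n = 1$. In dimension $n \geq 2$ (the relevant range for the Steklov estimate) we may therefore take $v \neq -u$.

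With such $u, v$ chosen, I split into two cases. If $y = x + v$ itself lies in $\delta\Omega$, then $y \in Q_2(x) \cap \delta\Omega$ and $y \neq x$, so $|Q_2(x) \cap \delta\Omega| \geq 2$ and we are done. Otherwise $y \in \Omega^e$, which means no neighbor of $y$ belongs to $\Omega$. Consider then $w := x + u + v$. Since $u \neq v$ and $u \neq -v$, the vectors $u$ and $v$ must lie along distinct coordinate axes (otherwise $u, v \in \{\pm e_i\}$ forces $u = v$ or $u = -v$), so $|w - x|_\infty = |u+v|_\infty = 1$, giving $w \in Q_2(x)$ and $w \neq x$. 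Moreover $w = y + u$ is a neighbor of $y$, so $w \notin \Omega$ (since $y \in \Omega^e$), while $w = z + v$ is a neighbor of $z \in \Omega$, so $w \in \delta\Omega$. Hence $\{x, w\} \subseteq Q_2(x) \cap \delta\Omega$.

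The main obstacle is the bookkeeping in the second case: verifying in a single line that the auxiliary vertex $w = x + u + v$ is simultaneously in $Q_2(x)$, not in $\Omega$, and adjacent to some vertex of $\Omega$. The choice $v \neq -u$ is exactly what makes all three properties hold at once, which is why the preliminary combinatorial step isolating this case is essential.
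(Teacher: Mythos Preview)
Your proof is correct and follows essentially the same strategy as the paper's: after locating neighbors $z=x+u\in\Omega$ and $y=x+v\notin\Omega$ along distinct coordinate axes (your pigeonhole on $D^+,D^-$ plays the role of the paper's contradiction argument for the existence of a suitable $N_{i_0j_0}$), both arguments construct the diagonal vertex $w=x+u+v=y+z-x$ and verify $w\in\delta\Omega\cap Q_2(x)$. Your explicit restriction to $n\geq 2$ is appropriate---the paper's proof also tacitly requires it (the claim about $N_{i_0j_0}$ is vacuous when $n=1$), and indeed the lemma fails for $n=1$, e.g.\ $\Omega=\{1,2,3\}\subset\Z$ with $x=0$.
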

\begin{proof}[Proof]
For any $x \in \delta\Omega,$ set $N(x):=\{y\in \Z^n: y\sim x\}.$ Without loss of generality, we may assume that $N(x)\cap \delta \Omega= \emptyset,$ otherwise the lemma follows trivially. 
Thus, with the decomposition (\ref{decomposition}), we have $N(x) \subset \Omega \cup \Omega^{e}.$ Moreover, by the assumption $x\not\in\delta'\Omega ,$ we get $N(x) \not\subset \Omega$ so that
\begin{equation}\label{eq:hhh1}
N(x)\cap \Omega^e\neq \emptyset,\ \mathrm{and}\ N(x)\cap \Omega\neq \emptyset.
\end{equation}
For any $1\leq i<j\leq n,$ we set $N_{ij}=\{x\pm e_i, x\pm e_j\},$ which is the intersection of $N(x)$ with the $2$-dimensional affine plane passing through $x$ spanned by the direction $\{e_i,e_j\}.$ We claim that there exist $i_0$ and $j_0,$ $1\leq i_0<j_0\leq n,$ such that $$N_{i_0j_0}\cap \Omega^e\neq \emptyset,\ \mathrm{and}\ N_{i_0j_0}\cap \Omega\neq \emptyset.$$ Suppose that it is not true, then for any pair $\{i,j\},$ $1\leq i<j\leq n,$ $N_{ij}\subset \Omega$ or $N_{ij}\subset \Omega^e.$ Without loss of generality, we assume that $N_{12}\subset \Omega.$ Then $x\pm e_1\in \Omega,$ which yields that
$N_{1j}\subset \Omega$ for any $j\geq 2$ by the above condition. This implies that $x\pm e_j\in \Omega$ for any $1\leq j\leq n$ and hence $N(x)\subset \Omega,$ which contradicts \eqref{eq:hhh1}. This proves the claim. By the claim, for $N_{i_0j_0},$ one easily verifies that there exists $y\in N_{i_0j_0}\cap \Omega^e\subset N(x)\cap \Omega^e$ and $z\in N_{i_0j_0}\cap \Omega\subset N(x)\cap \Omega$ such that
$$|y-z|=\sqrt2.$$ 
Set $w:=y+z-x.$ Then $$w\in Q_2(x),\ y\sim w,\ z\sim w.$$ Note that any path in $\Z^n$ connecting a vertex in $\Omega^e$ and a vertex in $\Omega$ contains a vertex in $\delta\Omega.$ Applying this to the path $y\sim w\sim z,$ we get $w\in \delta\Omega.$ This proves the lemma.
\end{proof}

For our purposes, we classify pairs of boundary edges, $\partial \Omega\times \partial \Omega,$ into ``good" pairs and ``bad" pairs. The motivation is similar to that for ``bad" boundary vertices, see Example~\ref{ex:pp1}.
\begin{defi} A pair of boundary edges $(\tau_1,\tau_2)\in \partial \Omega\times \partial \Omega$ is called ``good" if $P(\tau^{\perp}_1)\neq  P(\tau^{\perp}_2)$ or $P(\tau^{\perp}_1)=  P(\tau^{\perp}_2)\notin\delta'\Omega.$ Otherwise, it is called ``bad". We denote by $(\partial\Omega)^2_g$ the set of ``good" pairs of boundary vertices, and by $(\partial\Omega)^2_b$ the set of ``bad" pairs of boundary vertices.
\end{defi}
By the definition, \begin{equation}\label{eq:gb}\partial \Omega\times \partial \Omega=(\partial\Omega)^2_g\sqcup (\partial\Omega)^2_b\end{equation} and $$(\partial\Omega)^2_b=\{(\tau_1, \tau_2)\in \partial\Omega \times  \partial\Omega~|P(\tau^{\perp}_1)=  P(\tau^{\perp}_2)\in\delta'\Omega\}.$$

\begin{defi}[The multiplicity of a mapping]
Let $T: A\to B$ be a mapping. The multiplicity of the mapping $T$ is defined as
\[
m_T:=\sup_{b\in B}|\{T^{-1}(b)\}|.
\] 
\end{defi} 

We estimate the number of ``bad" pairs of boundary vertices by the number of ``bad" vertices.
\begin{pro}\label{prop:bpv1}Let $\Omega$ be a finite subset of $\Z^n.$ Then $$|(\partial\Omega)^2_b|\leq4n^2|\delta'\Omega|. $$
\end{pro}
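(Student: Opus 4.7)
The plan is to show that for each bad boundary vertex $x \in \delta'\Omega$, there are exactly $2n$ boundary edges $\tau \in \partial\Omega$ whose $P$-image equals $x$. Then bad pairs with common endpoint $x$ number $(2n)^2 = 4n^2$, and summing over $\delta'\Omega$ yields the bound. (In fact I expect equality, not just inequality.)

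First I would unpack the definition: by the identification $\partial\Omega \leftrightarrow \partial\Omega^\perp$, the set $(\partial\Omega)_b^2$ decomposes as
\[
(\partial\Omega)_b^2 \;=\; \bigsqcup_{x \in \delta'\Omega} \bigl\{(\tau_1,\tau_2) \in \partial\Omega \times \partial\Omega : P(\tau_1^\perp) = P(\tau_2^\perp) = x\bigr\}.
\]
So the problem reduces to counting $|P^{-1}(x)|$ for $x \in \delta'\Omega$ (under the identification $\partial\Omega \leftrightarrow \partial\Omega^\perp$).

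Next I would verify the key local fact: for $x \in \delta'\Omega$, every edge of $\Z^n$ incident to $x$ belongs to $\partial\Omega$ and has $P$-image equal to $x$. Indeed, by the definition of $\delta'\Omega$, every neighbor of $x$ lies in $\Omega$; since each vertex of $\Z^n$ has exactly $2n$ neighbors (one along each of $\pm e_i$, $1 \leq i \leq n$), there are exactly $2n$ edges incident to $x$. Each such edge $\tau$ connects $x \in \delta\Omega \subset \Omega^c$ to a vertex of $\Omega$, hence $\tau \in \partial\Omega = E(\Omega,\Omega^c)$; moreover, by the definition of $P$, its $\delta\Omega$-endpoint is $x$, so $P(\tau^\perp) = x$. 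Therefore $|P^{-1}(x)| = 2n$ for every $x \in \delta'\Omega$.

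Combining the two steps,
\[
|(\partial\Omega)_b^2| \;=\; \sum_{x \in \delta'\Omega} |P^{-1}(x)|^2 \;=\; \sum_{x \in \delta'\Omega} (2n)^2 \;=\; 4n^2\,|\delta'\Omega|,
\]
which gives the claimed estimate. There is no serious obstacle here — the statement is essentially a degree count in $\Z^n$ — but the one point worth being careful about is ensuring the clean identification between edges $\tau \in \partial\Omega$ and cubes $\tau^\perp \in \partial\Omega^\perp$, and the fact that for $x \in \delta'\Omega$ no edge incident to $x$ can fail to lie in $\partial\Omega$ (which relies on $x \notin \Omega$, guaranteed by $\delta'\Omega \subset \delta\Omega \subset \Omega^c$).
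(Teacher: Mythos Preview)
Your proof is correct and follows essentially the same approach as the paper: both count, for each $x\in\delta'\Omega$, the pairs $(\tau_1,\tau_2)$ with $P(\tau_1^\perp)=P(\tau_2^\perp)=x$, using that $x$ must be an endpoint of each $\tau_i$ and that $\Z^n$ is $2n$-regular. Your version is slightly sharper in that you observe this count is exactly $(2n)^2$ (since every neighbor of a bad vertex lies in $\Omega$), yielding equality rather than the paper's ``at most $4n^2$.''
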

\begin{proof}We define a mapping
\begin{align*}
H:(\partial\Omega)^2_b  & \rightarrow \delta'\Omega\\ 
(\tau_1,\tau_2)&   \mapsto H(\tau_1,\tau_2)=P(\tau_1^\perp).
\end{align*} We estimate the multiplicity of the mapping $H.$ Fix any $x\in \delta'\Omega.$ For any $(\tau_1,\tau_2)\in (\partial\Omega)^2_b $ such that $H(\tau_1,\tau_2)=x,$ 
$$P(\tau_1^\perp)=P(\tau_2^\perp)=x.$$ So that $x$ is a common end-vertex of $\tau_1$ and $\tau_2.$ The number of possible pairs $(\tau_1,\tau_2)$ satisfying this property is at most $4n^2.$
Hence $m_T\leq 4n^2.$ This proves the proposition.
\end{proof}

We define a mapping on``good" pairs of boundary vertices as
\begin{align}\label{def:f}
f:(\partial\Omega)^2_g& \rightarrow \delta\Omega \times \delta\Omega\\ 
(\tau_1 ,\tau_2)&   \mapsto   f(\tau_1 ,\tau_2)=(f_1(\tau_1 ,\tau_2), f_2(\tau_1 ,\tau_2)):=\begin{cases}
 (P(\tau_1^\perp),P(\tau_2^\perp)),&\text{if  } P(\tau_1^\perp)\neq P(\tau_2^\perp) \\ 
 (P(\tau_1^\perp), z), &\text{if  }  P(\tau_1^\perp)=P(\tau_2^\perp)\not\in \delta'\Omega,
\end{cases}\nonumber
\end{align}
where $z\in Q_2(P(\tau_1^\perp))\cap \delta\Omega$ such that $z\neq P(\tau_1^\perp).$
\begin{rem} In the definition of the mapping $f,$ the existence of $z\in Q_2(P(\tau_1^\perp))\cap \delta\Omega$ in the second case, which is not necessarily unique, is guaranteed by Lemma \ref{L2ofn}. Any choice of such a vertex $z$ will be sufficient for our purposes.
\end{rem}

\begin{lem}\label{fC1} Let $\Omega$ be a finite subset of $\Z^n.$ Then for any $(\tau_1,\tau_2)\in (\partial\Omega)^2_g,$
\[
\int_{\tau_1^\perp}\int_{\tau_2^\perp}|s-t|^2d\HH^{n-1}(s)d\HH^{n-1}(t)\leq C_3\cdot |f_1(\tau_1 ,\tau_2)- f_2(\tau_1 ,\tau_2)|^2,
\] where $C_3$ is the constant in Lemma~\ref{L1}.
\end{lem}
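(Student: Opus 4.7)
The plan is to split the argument according to the two branches in the definition of $f$ given in \eqref{def:f}.

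\textbf{Case 1.} Suppose $P(\tau_1^\perp)\neq P(\tau_2^\perp).$ Then by \eqref{def:f} we have $f_1(\tau_1,\tau_2)=P(\tau_1^\perp)$ and $f_2(\tau_1,\tau_2)=P(\tau_2^\perp),$ so the statement is exactly the conclusion of Lemma~\ref{L1}, and there is nothing more to prove.

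\textbf{Case 2.} Suppose $P(\tau_1^\perp)=P(\tau_2^\perp)=:x$ with $x\notin\delta'\Omega.$ Then $f_1(\tau_1,\tau_2)=x$ and $f_2(\tau_1,\tau_2)=z,$ where $z\in Q_2(x)\cap \delta\Omega$ and $z\neq x$ (the existence of such a $z$ is Lemma~\ref{L2ofn}). The key observation is that both integration domains $\tau_1^\perp$ and $\tau_2^\perp$ are unit $(n-1)$-cubes centered near $x,$ so the integrand $|s-t|^2$ is bounded by a constant depending only on $n.$ More precisely, exactly as in the proof of Lemma~\ref{L1}, every $s\in \tau_i^\perp$ satisfies $|s-P(\tau_i^\perp)|\leq \frac{\sqrt{n}}{2}.$ Since $P(\tau_1^\perp)=P(\tau_2^\perp)=x,$ the triangle inequality gives
\[
|s-t|\leq |s-x|+|x-t|\leq \sqrt{n}, \qquad \forall s\in\tau_1^\perp,\ t\in\tau_2^\perp.
\]
On the other hand, $x,z\in\Z^n$ and $x\neq z,$ so $|f_1(\tau_1,\tau_2)-f_2(\tau_1,\tau_2)|=|x-z|\geq 1.$ Since $\HH^{n-1}(\tau_i^\perp)=1,$ we obtain
\[
\int_{\tau_1^\perp}\int_{\tau_2^\perp}|s-t|^2\, d\HH^{n-1}(s)\, d\HH^{n-1}(t)\leq n\leq 4n\cdot |f_1(\tau_1,\tau_2)-f_2(\tau_1,\tau_2)|^2,
\]
which is the desired bound with $C_3=4n.$

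Both cases together cover $(\partial\Omega)^2_g$ by \eqref{eq:gb} and the defining property of $(\partial\Omega)^2_b,$ completing the proof. There is no real obstacle here; the only subtlety is that in Case 2 one must remember that $\tau_1^\perp$ and $\tau_2^\perp$ are unit cubes (hence have Hausdorff measure $1$), so the whole double integral is a priori $O(n),$ and the inequality then follows trivially from the mere fact that the two lattice points $f_1$ and $f_2$ are distinct and therefore at least unit distance apart.
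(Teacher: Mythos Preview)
Your proof is correct and follows essentially the same approach as the paper: reduce Case~1 directly to Lemma~\ref{L1}, and in Case~2 bound $|s-t|\leq\sqrt{n}$ via the triangle inequality through the common vertex $x=P(\tau_1^\perp)=P(\tau_2^\perp)$, then use $|f_1-f_2|\geq 1$. The paper's argument is identical in structure and in the constants obtained.
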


\begin{proof}[Proof of Lemma~\ref{fC1}] By Lemma \ref{L1}, it suffices to consider the case that $P(\tau_1^\perp)= P(\tau_2^\perp)\not\in \delta'\Omega.$ Set $X:=P(\tau_1^\perp)= P(\tau_2^\perp).$ Then $\tau_1\cap \tau_2=X.$ 
By the triangle inequality, for any $ s\in \tau_1^\perp,$ $ t\in \tau_2^\perp,$ we have
\begin{equation}
|s-t| \leq |s-X|+|X-t|\leq\sqrt{n}.\label{3.3}
\end{equation}
Then $\int_{\tau_1^\perp}\int_{\tau_2^\perp}|s-t|^2d\HH^{n-1}(s)d\HH^{n-1}(t)\leq n.$
Obviously, since $|f_1(\tau_1 ,\tau_2)- f_2(\tau_1 ,\tau_2)|=|P(\tau_1^\perp)-z|\geq1,$ 
\[
\int_{\tau_1^\perp}\int_{\tau_2^\perp}|s-t|^2d\HH^{n-1}(s)d\HH^{n-1}(t)\leq n\cdot |f_1(\tau_1 ,\tau_2)- f_2(\tau_1 ,\tau_2)|^2\leq C_3\cdot |f_1(\tau_1 ,\tau_2)- f_2(\tau_1 ,\tau_2)|^2.
\] This proves the lemma.
\end{proof}
By the definition of the multiplicity of $f,$ we have the following proposition.
\begin{pro}\label{prop:basic}
\[
\sum_{(\tau_1, \tau_2)\in (\partial\Omega)^2_g}|f_1(\tau_1 ,\tau_2)- f_2(\tau_1 ,\tau_2)|^2\leq m_f \cdot\sum_{z\in \delta\Omega} \sum_{\omega\in \delta\Omega}|z -\omega|^2.
\]
\end{pro}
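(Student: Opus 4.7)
The proof is essentially a one-line bookkeeping argument based on the definition of the multiplicity $m_f$ of the mapping $f$. The plan is to re-index the sum on the left-hand side by the image $(z,\omega)=f(\tau_1,\tau_2)\in\delta\Omega\times\delta\Omega$ rather than by the preimage $(\tau_1,\tau_2)\in(\partial\Omega)^2_g$, and then use the fact that each image point has at most $m_f$ preimages.

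Concretely, I would first rewrite
\[
\sum_{(\tau_1,\tau_2)\in(\partial\Omega)^2_g}|f_1(\tau_1,\tau_2)-f_2(\tau_1,\tau_2)|^2
=\sum_{(z,\omega)\in\delta\Omega\times\delta\Omega}\;\sum_{(\tau_1,\tau_2)\in f^{-1}(z,\omega)}|z-\omega|^2,
\]
where the inner sum is over preimages lying in $(\partial\Omega)^2_g$, and the summand $|z-\omega|^2$ no longer depends on $(\tau_1,\tau_2)$ once the image is fixed. Hence the inner sum equals $|f^{-1}(z,\omega)|\cdot|z-\omega|^2$.

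By the definition of the multiplicity, $|f^{-1}(z,\omega)|\leq m_f$ for every $(z,\omega)\in\delta\Omega\times\delta\Omega$. Substituting gives
\[
\sum_{(\tau_1,\tau_2)\in(\partial\Omega)^2_g}|f_1(\tau_1,\tau_2)-f_2(\tau_1,\tau_2)|^2
\leq m_f\sum_{(z,\omega)\in\delta\Omega\times\delta\Omega}|z-\omega|^2
= m_f\sum_{z\in\delta\Omega}\sum_{\omega\in\delta\Omega}|z-\omega|^2,
\]
which is exactly the claimed inequality.

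There is no real obstacle here; the only thing to be careful about is ensuring that the re-indexing is well-defined, namely that $f$ does take $(\partial\Omega)^2_g$ into $\delta\Omega\times\delta\Omega$ (which is built into the definition \eqref{def:f}: in the first branch both coordinates are clearly in $\delta\Omega$, and in the second branch $z\in Q_2(P(\tau_1^\perp))\cap\delta\Omega$ exists by Lemma~\ref{L2ofn}). So the proof is essentially immediate from the definition of $m_f$.
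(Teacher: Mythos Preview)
Your argument is correct and is exactly the approach the paper has in mind: the paper states this proposition as an immediate consequence of the definition of the multiplicity $m_f$, and your re-indexing by the image $(z,\omega)=f(\tau_1,\tau_2)$ together with the fiber bound $|f^{-1}(z,\omega)|\leq m_f$ is precisely what that means.
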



\begin{lem} Let $\Omega$ be a finite subset of $\Z^n.$ Then
$$m_f\leq 8n^2.$$\label{L4ofn}
\end{lem}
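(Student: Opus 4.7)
The plan is to bound $m_f$ by fixing an arbitrary pair $(z,w)\in\delta\Omega\times\delta\Omega$ in the image of $f$ and directly counting the cardinality of $f^{-1}(z,w)$. The key elementary observation I would use throughout is that any boundary edge $\tau=\{x,y\}\in\partial\Omega$ is determined by the pair consisting of its endpoint $P(\tau^\perp)\in\delta\Omega$ and the direction $\pm e_i$ that carries it into $\Omega$. Since there are exactly $2n$ such directions in $\Z^n$, the number of boundary edges $\tau\in\partial\Omega$ with $P(\tau^\perp)$ equal to any prescribed vertex of $\delta\Omega$ is at most $2n$. This is the only structural input beyond the definition of $f$.

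Next I would split $f^{-1}(z,w)$ according to the two cases in the definition \eqref{def:f}. In the first case one has $P(\tau_1^\perp)=z$ and $P(\tau_2^\perp)=w$ with $z\neq w$; the preceding observation gives at most $2n$ choices for $\tau_1$ and at most $2n$ for $\tau_2$, contributing at most $4n^2$ preimages. In the second case one has $P(\tau_1^\perp)=P(\tau_2^\perp)=z\notin\delta'\Omega$, and the auxiliary vertex chosen in $Q_2(z)\cap\delta\Omega$ happens to equal $w$; again $\tau_1$ and $\tau_2$ each range over at most $2n$ boundary edges incident to $z$, yielding a further $4n^2$ preimages. Adding the two contributions gives $|f^{-1}(z,w)|\leq 8n^2$, and since $(z,w)$ was arbitrary this is the desired bound $m_f\leq 8n^2$.

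I would expect no serious obstacle here: the argument is structurally parallel to Proposition~\ref{prop:bpv1}, where the mapping $H$ was handled by the same direction-counting idea. The only subtle point worth flagging explicitly is that case~2 does not require any injectivity of the choice $z\mapsto z$, because the auxiliary vertex is used only as a coordinate in $\delta\Omega\times\delta\Omega$; once $w$ is prescribed, the pair $(\tau_1,\tau_2)$ is still constrained only through the $2n$-bound on edges with $P(\tau^\perp)=z$. Hence the sum $4n^2+4n^2=8n^2$ really is an honest upper bound, and no overcounting issues between the two cases arise.
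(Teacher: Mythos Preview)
Your proof is correct and essentially identical to the paper's argument: both fix a target pair, observe that $P(\tau^\perp)$ determines a boundary edge up to $2n$ direction choices, and split into the same two cases from the definition of $f$. The only cosmetic difference is that the paper first bounds the choices for $\tau_1$ by $2n$ and then combines the two cases to get $4n$ choices for $\tau_2$, arriving at $2n\cdot 4n=8n^2$, whereas you compute $4n^2$ per case and add; the content is the same.
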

\begin{proof}[Proof]
Fix $(x,y)\in f((\partial\Omega)^2_g).$ Let $(\tau_1,\tau_2)\in (\partial\Omega)^2_g$ such that $f(\tau_1,\tau_2)=(x,y).$ We estimate the number of possible pairs $(\tau_1,\tau_2)$ satisfying the above property. By the definition of $f$ in \eqref{def:f}, $f_1(\tau_1,\tau_2)=P(\tau_1^\perp)=x,$ which yields that $x$ is an end-vertex of the edge $\tau_1.$ Hence, the number of possible choices of $\tau_1$ is at most $2n.$ To determine the edge $\tau_2,$ it is divided into the following cases.
\begin{enumerate}
\item[\emph{Case~1.}] $P(\tau_1^\perp)\neq P(\tau_2^\perp).$ By \eqref{def:f},
$$f_2(\tau_1,\tau_2)=P(\tau_2^\perp)=y.$$ Hence, $y$ is an end-vertex of the edge $\tau_2.$ So that the number of possible choices of $\tau_2$ is at most $2n.$
\item[\emph{Case~2.}] $P(\tau_1^\perp)=P(\tau_2^\perp)\not\in \delta'\Omega.$ In this case, $P(\tau_2^\perp)=x$ and $x$ is an end-vertex of the edge $\tau_2.$ Hence, the number of possible choices of $\tau_2$ is at most $2n.$
\end{enumerate} Combining the above cases, the number of possible choices of $\tau_2$ is at most $4n.$ This yields that $$m_f\leq 2n\times 4n=8n^2.$$ This proves the lemma.
\end{proof}

The following lemma is a version of weighted isoperimetric inequality by symmetrization approaches, see e.g. \cite{betta1999}, 
which is crucial in the proof of Brock's result in $\R^n.$ For any $R>0,$ we denote by $B_R$ the ball of radius $R$ centered at the origin in $\R^n.$
\begin{lem}\label{lem:rearr} Let $U$ be a bounded domain in $\R^n$ with Lipschitz boundary. Let $R>0$ such that $\HH^n(B_R)=\HH^n(U).$ Let $g\in C([0,+\infty))$ be nonnegative, nondecreasing and suppose that 
\begin{equation*}
(g(z^{\frac{1}{n}})-g(0))z^{1-\frac1n},~z\geq 0 \label{convex}
\end{equation*}
is convex. Then
\begin{equation*}
\int_{\partial U}g(|x|)dS\geq\int_{\partial 
B_R}g(|x|)dS=n\omega _ng(R)R^{n-1}.
\end{equation*}
\end{lem}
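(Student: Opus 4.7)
The strategy is a symmetrization argument combining the coarea formula, the classical (unweighted) isoperimetric inequality, and the specific convexity hypothesis on $\phi(z) := (g(z^{1/n}) - g(0))\, z^{1-1/n}$. By a standard mollification I may assume $g \in C^1$.

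First, I write the target integral as a layer-cake. Using $g(|x|) = g(0) + \int_0^{|x|} g'(s)\,ds$ and Fubini gives
$$\int_{\partial U} g(|x|)\,dS = g(0)\,\HH^{n-1}(\partial U) + \int_0^\infty g'(r)\,\HH^{n-1}\bigl(\partial U \cap \{|x|>r\}\bigr)\,dr.$$
The first term is bounded below by $g(0)\, n\omega_n R^{n-1}$ by the classical isoperimetric inequality. For the second, I apply the classical isoperimetric inequality to the truncated set $U \setminus \bar{B}_r$, whose boundary consists (up to $\HH^{n-1}$-null sets) of $\partial U \cap \{|x|>r\}$ together with $U \cap \partial B_r$. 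Setting $\mu(r) := \HH^n(U \setminus B_r)$ and using the coarea identity $\HH^{n-1}(U \cap \partial B_r) = -\mu'(r)$, this yields
$$\HH^{n-1}\bigl(\partial U \cap \{|x|>r\}\bigr) \geq n\omega_n^{1/n}\,\mu(r)^{(n-1)/n} + \mu'(r).$$

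Second, I substitute this bound into the layer-cake identity, integrate by parts the term $\int_0^\infty g'(r)\mu'(r)\,dr$ (using $\mu(0) = \omega_n R^n$ and $\mu(\infty) = 0$), and then change variables from the radial parameter $r$ to the volume parameter $z = \omega_n R^n - \mu(r) = \HH^n(U \cap B_r)$. The role of the convexity hypothesis now becomes transparent: $\phi(z)$ is, up to the additive piece $g(0) z$ corresponding to the trivial part already accounted for, exactly the boundary integral $\int_{\partial B_\rho} g\,dS$ of $g$ on a ball of volume $z = \omega_n \rho^n$, so the sharp ball case corresponds to evaluating $\phi$ at $z = \omega_n R^n$. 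Invoking the convexity of $\phi$ with $\phi(0) = 0$ via a Jensen/rearrangement comparison in the $z$-variable produces the desired lower bound $n\omega_n g(R) R^{n-1}$.

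The main obstacle will be this last step: correctly identifying the integrand after the change of variables so that it is the convexity of $\phi$ (rather than the strictly stronger convexity of $g$ itself) that one needs, and verifying that all the boundary contributions from the integration by parts combine cleanly with the term $g(0)\, n\omega_n R^{n-1}$ to upgrade it to $g(R)\, n\omega_n R^{n-1}$. The specific form of the hypothesis has been calibrated so that $\phi(z)$ is essentially the ball boundary integral as a function of enclosed volume, making its convexity in $z$ precisely what turns the pointwise-in-$r$ isoperimetric inequality into the sharp constant.
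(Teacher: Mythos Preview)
First, note that the paper does not give its own proof of this lemma: it is quoted from \cite{betta1999} (Betta--Brock--Mercaldo--Posteraro). So there is no in-paper argument to compare against, and the question is simply whether your sketch can be made to work.

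There is a genuine gap, and it occurs already at your first isoperimetric step, not at the vague convexity step you flag. You apply the classical isoperimetric inequality to $U\setminus \bar B_r$, obtaining
\[
\HH^{n-1}\bigl(\partial U \cap \{|x|>r\}\bigr)\ \geq\ n\omega_n^{1/n}\mu(r)^{(n-1)/n}+\mu'(r),
\]
and then substitute the right side into the layer-cake formula. Call the resulting lower bound $I(U)$. The problem is that this substitution is \emph{not sharp for the ball}: with $U=B_R$ one has $\mu(r)=\omega_n(R^n-r^n)$, and a direct computation gives
\[
I(B_R)-n\omega_n g(R)R^{n-1}
= n\omega_n\int_0^R g'(r)\bigl[(R^n-r^n)^{(n-1)/n}-r^{n-1}-R^{n-1}\bigr]\,dr.
\]
The bracket vanishes at $r=0$ and is strictly negative on $(0,R]$, so $I(B_R)<n\omega_n g(R)R^{n-1}$ for every nondecreasing $g$ that is not constant on $[0,R]$. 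Hence the inequality $I(U)\geq n\omega_n g(R)R^{n-1}$ that you need in order to close the argument is \emph{false} already for the extremal set $U=B_R$; no subsequent integration by parts, change of variables, or convexity argument can recover what was lost, because the information making the ball optimal has been discarded. (Geometrically: $B_R\setminus \bar B_r$ is an annulus, far from an isoperimetric extremal.)

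If you want to salvage the general strategy, note that applying the isoperimetric inequality to $U\cap B_r$ instead of $U\setminus \bar B_r$ \emph{is} tight for $U=B_R$ at every $r<R$. This is the truncation used in the Betta et al. argument; however, the sign then goes the wrong way for a naive layer-cake substitution, and one has to organize the proof as a Gronwall-type comparison in the volume variable $v(r)=\HH^n(U\cap B_r)$, which is exactly where the convexity of $\phi(z)=(g(z^{1/n})-g(0))z^{1-1/n}$ enters. Your sketch has the right ingredients but applies the key inequality to the wrong truncation.
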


Now we are ready to prove our main result. 
\begin{proof}[Proof of Theorem~\ref{mainthm:1}]


For $1\leq k\leq n,$ we denote by $E_k$ the edges in $E(\Omega, \bar{\Omega})$ parallel to $e_k,$ and by $|E_k|$ the cardinality of $E_k.$
Using the coordinate functions of $\R^n$, we define functions $u_i\in \R^{\Z^n}$ for $2\leq i\leq n+1$ as follows,
\[
u_i(z):=|E_{i-1}|^{-\frac{1}{2}}(z_{i-1}-\overline{z_{i-1}}),
\] where $$\overline{z_{i-1}}:=\frac{1}{| \delta\Omega|}\sum_{z\in \delta\Omega}z_{i-1}.$$
Then, we get
$$D_{\Omega}(u_i,u_j)=\delta_{ij},\ \sum_{z\in \delta\Omega}u_i(z)=0,\quad 2\leq i,j\leq n+1.$$ 
Hence, by the variational principle,  Theorem~\ref{thm:variation} with $p = n+1$,
\begin{eqnarray*}\sum^{n+1}_{i=2}\frac{1}{\lambda _i(\Omega)}
&\geq&\sum^{n+1}_{i=2}\sum_{z\in \delta\Omega}u^2_i(z)
=\sum^{n+1}_{i=2}\sum_{z\in \delta\Omega}\frac{|z_{i-1}-\overline{z_{i-1}}|^2}{|E_{i-1}|}\\
&\geq&  \frac{1}{|E(\Omega, \bar{\Omega})|} \sum^{n+1}_{i=2}\sum_{z\in \delta\Omega}|z_{i-1}-\overline{z_{i-1}}|^2.\\
\end{eqnarray*}
Note that
\begin{eqnarray*}
&&\sum_{z\in \delta\Omega} \sum_{\omega\in \delta\Omega}|z -\omega|^2 \\
&=&\sum_{z\in \delta\Omega} \sum_{\omega\in \delta\Omega}\sum^{n+1}_{i=2}|z_{i-1}-\omega_{i-1}|^2 = \sum^{n+1}_{i=2} \left(2|\delta\Omega|\sum_{z\in \delta\Omega} z^2_{i-1}-2(\sum_{z\in \delta\Omega}z_{i-1})^2\right)\\
&= &2|\delta\Omega|\sum^{n+1}_{i=2}\left(\sum_{z\in \delta\Omega}z^2_{i-1}-\frac{(\sum_{z\in \delta\Omega}z_{i-1})^2}{| \delta\Omega|} \right)= 2|\delta\Omega|\sum_{z\in \delta\Omega}\sum^{n+1}_{i=2}(z^2_{i-1}-2\overline{z_{i-1}}\cdot z_{i-1}+\overline{z_{i-1}}^2) \\
&= &2|\delta\Omega|\sum_{z\in \delta\Omega}\sum^{n+1}_{i=2}(z_{i-1}-\overline{z_{i-1}})^2.
\end{eqnarray*}
Hence, we get
\begin{equation}
\sum^{n+1}_{i=2}\frac{1}{\lambda _i(\Omega)}\geq \frac{1}{2|\delta\Omega|}\cdot  \frac{1}{|E(\Omega, \bar{\Omega})|} \cdot \sum_{z\in \delta\Omega} \sum_{\omega\in \delta\Omega}|z -\omega|^2. \label{main}
\end{equation}


In the following, we compare the right hand side of the above inequality, a discrete summation, with a continuous quantity,
$$\int_{\partial(\hat{\Omega})}\int_{\partial(\hat{\Omega})}|s-t|^2d\HH^{n-1}(s)d\HH^{n-1}(t).$$
By \eqref{eq:pho} and the decomposition of pairs of boundary edges into ``good" and ``bad" ones, \eqref{eq:gb}, we have
\begin{align*}
\int_{\partial(\hat{\Omega})}\int_{\partial(\hat{\Omega})}|s-t|^2d\HH^{n-1}(s)d\HH^{n-1}(t) 
&= \sum_{(\tau_1, \tau_2)\in \partial\Omega\times\partial \Omega}\int_{\tau_1^\perp}\int_{\tau_2^\perp}|s-t|^2d\HH^{n-1}(s)d\HH^{n-1}(t)\\
&= \left(\sum_{(\tau_1, \tau_2)\in (\partial\Omega)^2_g}+ \sum_{(\tau_1, \tau_2)\in (\partial\Omega)^2_b}\right)\int_{\tau_1^\perp}\int_{\tau_2^\perp}|s-t|^2d\HH^{n-1}(s)d\HH^{n-1}(t)\\ 
&=: \RNum{1}+\RNum{2}.
\end{align*}

By Lemma~\ref{fC1} and Proposition~\ref{prop:basic}, 
\begin{eqnarray*}
\RNum{1}&=&\sum_{(\tau_1, \tau_2)\in (\partial\Omega)^2_g}\int_{\tau_1^\perp}\int_{\tau_2^\perp}|s-t|^2d\HH^{n-1}(s)d\HH^{n-1}(t)\leq C_3\cdot m_f \sum_{z\in \delta\Omega} \sum_{\omega\in \delta\Omega}|z -\omega|^2.\end{eqnarray*}
For any $(\tau_1, \tau_2)\in (\partial\Omega)^2_b,$ the inequality (\ref{3.3}) yields 
\[
\int_{\tau_1^\perp}\int_{\tau_2^\perp}|s-t|^2d\HH^{n-1}(s)d\HH^{n-1}(t) \leq n.
\]
Combining this with Proposition~\ref{prop:bpv1}, we get
\[
\RNum{2}\leq 4n^3|\delta'\Omega|.
\]
Hence, we obtain
\begin{equation}
\int_{\partial(\hat{\Omega})}\int_{\partial(\hat{\Omega})}|s-t|^2d\HH^{n-1}(s)d\HH^{n-1}(t)=\RNum{1}+\RNum{2}
\leq C_3\cdot m_f \sum_{z\in \delta\Omega} \sum_{\omega\in \delta\Omega}|z -\omega|^2 +4n^3|\delta'\Omega|.
\label{right}
\end{equation}
By the calculation,
\begin{equation}\label{eq:dev1}
\int_{\partial(\hat{\Omega})}\int_{\partial(\hat{\Omega})}|s-t|^2d\HH^{n-1}(s)d\HH^{n-1}(t)=2\HH^{n-1}(\partial(\hat{\Omega}))\int_{\partial(\hat{\Omega})}|s-c|^2d\HH^{n-1}(s),
\end{equation}
where $c=\frac{1}{\HH^{n-1}(\partial(\hat{\Omega}))}\int_{\partial(\hat{\Omega})}sd\HH^{n-1}(s)$ is the barycenter of $\partial(\hat{\Omega}).$ 
We denote by $\Omega_0:=\hat{\Omega}-c$ the translation of the domain $\hat{\Omega}$ by a vector $-c.$ It is easy to see that the barycenter of the boundary of $\Omega_0$ is the origin, so that
\[
\int_{\partial(\hat{\Omega})}|s-c|^2d\HH^{n-1}(s)=\int_{\partial\Omega_0}|s|^2d\HH^{n-1}(s).
\]

Since $\HH^n(\Omega_0)=\HH^n(\hat{\Omega})=|\Omega|,$ we choose $R>0,$ such that
$$\HH^n(B_R)=\omega_n R^n=|\Omega|=\HH^n(\Omega_0).$$
By Lemma \ref{lem:rearr}, taking $U=\Omega_0$ and $g(t)=t^2,$ we get
\[
\int_{\partial\Omega_0}|s|^2d\HH^{n-1}(s)\geq \int_{\partial B_R}|s|^2d\HH^{n-1}(s)=n\omega_nR^{n+1}=n|\Omega| R.
\]
By the above estimates, noting that $\HH^{n-1}(\partial(\hat{\Omega}))=|\partial \Omega|,$ we obtain
\begin{equation}
\int_{\partial(\hat{\Omega})}\int_{\partial(\hat{\Omega})}|s-t|^2d\HH^{n-1}(s)d\HH^{n-1}(t)\geq2n|\partial\Omega|\cdot|\Omega|R.
\label{left}
\end{equation}
By the equations (\ref{right}) and (\ref{left}), 
\begin{equation}\label{eq:est1}
\sum_{z\in \delta\Omega} \sum_{\omega\in \delta\Omega}|z -\omega|^2 \geq \frac{2n(|\partial\Omega|\cdot |\Omega|R-2n^2|\delta'\Omega|)}{C_3\cdot m_f}.
\end{equation}
Note that
\[
|\delta'\Omega|\leq |\delta\Omega|\leq |\partial\Omega|\leq 2n|\Omega|,
\]
and
\[
2|E(\Omega, \bar{\Omega})|=2n\cdot |\Omega|+|\partial\Omega|.
\] This yields that  $$|E(\Omega, \bar{\Omega})|=n|\Omega|+\frac{1}{2}|\partial\Omega|\leq 2n|\Omega|.$$
By (\ref{main}) and \eqref{eq:est1},
\begin{eqnarray*}
\sum^{n+1}_{i=2}\frac{1}{\lambda _i(\Omega)}
&\geq& \frac{2n(|\partial\Omega|\cdot |\Omega|R-2n^2|\delta'\Omega|)}{2|\delta\Omega|\cdot|E(\Omega, \bar{\Omega})|C_3\cdot m_f}.
\end{eqnarray*}

This yields that
\begin{eqnarray*}
\sum^{n+1}_{i=2}\frac{1}{\lambda _i(\Omega)}
&\geq& \frac{n(|\partial\Omega|\cdot |\Omega|R-2n^2|\delta'\Omega|)}{|\partial\Omega|\cdot 2n|\Omega|C_3\cdot m_f}\\
&\geq&\frac{1}{2C_3\cdot m_f}\left(R-\frac{2n^2}{|\Omega|}\right)=\frac{1}{2C_3\cdot m_f}\left((\omega_n^{-1}|\Omega|)^{\frac1n}-\frac{2n^2}{|\Omega|}\right).
\end{eqnarray*}
By Lemma~\ref{L4ofn}, we get
$$\sum^{n+1}_{i=2}\frac{1}{\lambda _i(\Omega)}
\geq C_1|\Omega|^{\frac1n}-\frac{C_2}{|\Omega|},$$
where $C_1=(64 n^3 \omega_n^{\frac1n})^{-1}, C_2=\frac{1}{32n}.$
This proves the theorem.
\end{proof}

We construct an infinite graph with bounded degree, which doesn't satisfy the property \eqref{eq:to0}. For any $n\in \N,$ we denote by $T_n$ the graph obtained by a complete binary tree of depth $n$ with a pending vertex $P_{T_n}$ attaching to the root of the tree, see Figure~\ref{fig:K_n}.  Let $T_n'$ be a copy of $T_n.$ Let $K_n$ be the graph constructed by the disjoint union of $T_n$ and $T_n'$ by identifying the leaves of $T_n$ with those of $T_n'$ accordingly. Note that $K_n$ has two pending vertices $P_{T_n}$ and $P_{T_n'},$ see Figure~\ref{fig:K_n}.
\begin{figure}[htbp]
 \begin{center}
   \begin{tikzpicture}
    \node at (0,0){\includegraphics[width=0.5\textwidth]{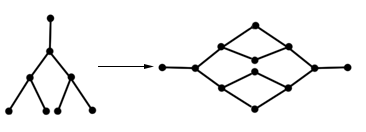}};
    \node at (-2.8, -1.5){\Large $T_2$};
    \node at (1.7, -1.5){\Large $K_2$};
        \node at (-0.2, 0.5){\Large $P_{T_2}$};
    \node at (3.6, 0.5){\Large $P_{T_2'}$};
   \end{tikzpicture}
  \caption{$K_2$ is constructed by $T_2$ and its copy $T_2'$. }
\label{fig:K_n}
 \end{center}
\end{figure}
\begin{figure}[htbp]
 \begin{center}
   \begin{tikzpicture}
    \node at (0,0){\includegraphics[width=0.9\textwidth]{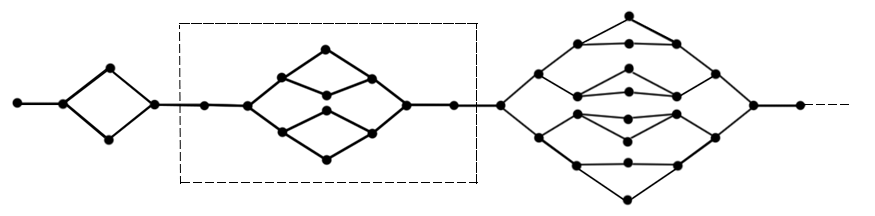}};
    \node at (-2.0, -2.0){\Large $K_2$};
     \node at (3.1, -2.0){\Large $K_3$};
   \end{tikzpicture}
  \caption{An infinite graph doesn't satisfy the property \eqref{eq:to0}.}
\label{fig:tree}
 \end{center}
\end{figure}
\begin{exa}\label{tree} Let $G=(K_1\cup K_2\cup K_3\cup\cdots)/\sim $ be an infinite graph obtained by a disjoint union of $\{K_n\}_{n=1}^\infty$ with sequentially identifying pending vertices, $P_{T_n'}$ and $P_{T_{n+1}}$, of $K_n$ and $K_{n+1},$ $n\geq 1.$ Let $\Omega_i=K_i,$ $i\geq 1,$ where $K_i$ is the embedded image of $K_i$ in $G.$ Then $$\lambda_2(\Omega_i)=\frac{2}{\mathrm{Res}(\overline{K_i})}=\frac{2}{6-\frac{1}{2^{i-1}}},\quad\forall i\geq 2,$$ where $\mathrm{Res}(\overline{K_i})$ is the effective resistance between the vertices in $\delta K_i$ in the induced subgraph on $\overline{K_i},$ for which each edge is interpreted as a resistor of resistance one; see \cite[Definition~2.7, p.42]{Barlow2017book} for the definition. This yields that
$$\lambda_2(\Omega_i)\geq c>0,\quad i\geq 1.$$
\end{exa}


\bigskip

\textbf{Acknowledgements.} 
We thank the anonymous referees for providing helpful comments and suggestions to improve the writing of the paper, in particular pointing out some useful references on Steklov eigenvalues. We thank Zuoqin Wang for many helpful suggestions on the Steklov problem on graphs. 

B. H. is supported by NSFC (China), grant no. 11831004 and no. 11826031.


\bibliography{subgraphslattices}
\bibliographystyle{alpha}

\end{document}